\documentclass[1pt]{article}
\usepackage{graphicx}%
\usepackage{multirow}%
\usepackage{amsmath,amssymb,amsfonts}%
\usepackage{amsthm}%
\usepackage{mathrsfs}%
\usepackage[title]{appendix}%
\usepackage{xcolor}%
\usepackage[export]{adjustbox}
\usepackage{bm}
\usepackage{footnotehyper}
\usepackage{amssymb,amsmath,amsthm}
\usepackage{amscd}
\usepackage{hyperref}
\usepackage{extarrows}
\usepackage{lipsum}
\usepackage{epsfig}
\usepackage{amsfonts}
\usepackage{amssymb}
\usepackage{amsmath,enumerate}
\usepackage{commath}
\usepackage{euscript}
\usepackage{enumitem}
\usepackage{amscd}
\usepackage{xcolor}
\usepackage{makecell}
\usepackage{hhline}
\usepackage[numbers,sort&compress]{natbib}
\usepackage{lscape}
\usepackage{textcomp}%
\usepackage{manyfoot}%
\usepackage{booktabs}%
\usepackage{algorithm}%
\usepackage{algorithmicx}%
\usepackage{algpseudocode}%
\usepackage{listings}%
\usepackage{multirow}
\usepackage{amsmath, amsthm, amssymb}
\usepackage{graphicx}
\usepackage{algorithm,setspace}
\usepackage{rotating}
\usepackage{enumitem}
\usepackage{algpseudocode}
  
\usepackage{commath}
\usepackage{booktabs} 

\newtheorem{lemma}{Lemma}  

\usepackage{hyperref}
\hypersetup{
	colorlinks=true,
	linkcolor=blue,
	filecolor=blue,
	urlcolor=blue,
}
\theoremstyle{thmstyleone}%
\newtheorem{theorem}{Theorem}
\theoremstyle{thmstyletwo}%
\newtheorem{remark}{Remark}%

\theoremstyle{thmstylethree}%

\begin{document}
	\title{Parametric bootstrap simultaneous confidence interval  based on doubly type-II censoring}
	\author{Dipak Patra and Lakshmi Kanta Patra
		\footnote{\baselineskip=10pt
			lkpatra@iitbhilai.ac.in;~ patralakshmi@gmail.com}\\
		Department of Mathematics\\
		Indian Institute of Technology Bhilai, Durg, India-491002}
	
	\date{}
	\maketitle
	\begin{abstract}
		Parametric bootstrap and fiducial generalized methods are proposed to construct simultaneous confidence intervals (SCIs) for all pairwise mean differences of several two-parameter exponential distributions, based on doubly censored samples. Both methods have been shown to have correct asymptotic coverage probabilities. Simulation studies are conducted to compare the performance of the two proposed methods. The usefulness of our proposed procedure is illustrated with two examples.\\
	\end{abstract}
	
	
	\section{Introduction} 
	The two-parameter exponential distribution arises naturally in many real-life applications, such as reliability engineering, life-testing experiments, survival analysis, insurance, actuarial science and risk analysis. This distribution is extensively used in modelling lifetime data.  Let $\Pi_i, i=1,2,\dots,k$ be $k$ exponential populations having density  
	\begin{equation}\label{baea}
		f_{X_i}(x;\mu_i,\sigma_i)=\left\{\begin{array}{ll}
			\frac{1}{\sigma_i}\exp\left(-\frac{x-\mu_i}{\sigma_i}\right),\ \, x>\mu_i \\
			0,~~~~~~~~~~~~~~~~~~~~\mbox{Otherwise}.\\
		\end{array}
		\right.
	\end{equation} 
	where $\mu_i \in \mathbb{R}$ is the location parameter, $\sigma_i>0$ is the scale parameter. For each population, define $\theta_{il} = \theta_{i}-\theta_{l}$, where $\theta_i=\mu_i+\sigma_{i},\ \ i\neq l$ and $i , l =1,2,\dots,k$. These pairwise differences of means are the parameter of interest in this study. 
	
	Comparison of the mean lifetimes of different products is an important problem in quality control, reliability analysis, and experimental design for product quality evaluation and maintenance planning. In the last two decades, the problem of finding parametric bootstrap SCIs for the parameters of various distributions has been studied by several authors. \cite{kharrati2013simultaneous} presents simultaneous fiducial generalized confidence intervals for the successive difference of location parameters of a two-parameter exponential distribution, and they also proved that these confidence intervals have correct coverage probability asymptotically. \cite{malekzadeh2014comparing} proposed a parametric bootstrap method to construct SCIs for the differences of location parameters of the treatment groups and control groups. Also, they demonstrate their results through simulation studies and a real data analysis. \cite{sadooghi2014simultaneous} developed a parametric bootstrap approach to propose simultaneous confidence intervals for lognormal mean ratios. Using Monte Carlo simulations, they compared their method with existing GPQ and FGPQ procedures and showed that it achieved more accurate empirical coverage probabilities and substantially shorter confidence intervals.

	\cite{li2015parametric} proposed a parametric bootstrap procedure to propose SCIs for the differences of means of the two-parameter exponential distribution. Also, they proved that the proposed SCIs have correct coverage probability asymptotically and demonstrated this result through simulation studies. Simultaneous pairwise confidence intervals for comparing several inverse Gaussian means under heteroscedasticity were developed by \cite{kharrati2017simultaneous}. The authors used one classic method and two parametric bootstrap methods. Simulation results showed that the bootstrap method achieved coverage probabilities closer to the nominal level. \cite{malekzadeh2020simultaneous} derived SCIs for pairwise quantile difference of several two-parameter exponential distributions under progressive Type-II censoring using classical, generalized fiducial and parametric bootstrap approaches. \cite{thangjai2019simultaneous} proposed SCIs for all pairwise differences of coefficients of variation from several normal populations based on generalized confidence interval, MOVER and computational methods. They also studied the same problem for lognormal distributions (see \cite{thangjai2022simultaneous}).
	\cite{ye2023bootstrap} studied the testing of equality of location parameters in several skew-normal populations with unknown scale and skewness parameters. The authors used a bootstrap test statistic based on moment and maximum-likelihood estimators for thisproblem. 
	\cite{maneerat2021simultaneous} developed SCIs for pairwise mean differences of multiple delta-lognormal populations using parametric bootstrap, FGCI, MOVER and Bayesian credible intervals. \cite{tian2022confidence} studied confidence intervals for the differences of medians of independent lognormal distributions using several approaches, including parametric bootstrap. 
	A bootstrap method for comparing non-normal data with unequal variances was proposed by \cite{johnston2021bootstrap}. \cite{singhasomboon2023asymptotic} developed asymptotic and bootstrap confidence intervals for the ratio of modes of two lognormal distributions.
	
	It is observed that the researchers primarily studied the SCI using an i.i.d. sample. There is little literature available on finding SCI based on a censored sample.    In this work, we study the construction of simultaneous confidence intervals for all pairwise differences of exponential means $\theta_{il}$ under doubly Type-II censoring. In the doubly Type-II censoring scheme, we remove the first $r-1$ lifetimes and the last $n-s$ lifetimes from the set of $n$ ordered lifetimes. 
	
	Let $X_{i1}, X_{i2},\dots, X_{in_i},\ \, i, =1,2,\dots,k$ be a random sample of size $n_{i}$ from $i$-th population. Assume that all observations are mutually independent across and within the samples. 
	For each fixed $i=1,2,\dots,k$,
	$$X_{ir_{i},n_{i}}\leq X_{ir_{i+1},n{i}}\leq \dots \leq X_{is_{i},n_{i}}$$ 
	denote the observed doubly type-II censored sample after removing the smallest $(r_i-1)$ observations and the largest $(n_i-s_i)$  observations, where $r_i<s_i$. We assume  on $r_i-1=n_ip_i$ and $n_i-s_i=n_iq_i$ (See \cite{fernandez2002computing}). Here $p_i$ and $\ q_i $ are fixed censoring proportions satisfying $0\le p_i+q_i < 1$.  We have $(X_{ir_{i},n_i}, V_{i,n_i})$,  is a complete and sufficient statistics for $(\mu_{i}, \sigma_{i})$, where $V_{i,n_{i}}=-(n_i-r_i){X_{ir_{i},n_i}}+{X_{ir_{i+1},n_i}}+\dots+(n_i-s_i+1)
	X_{is_{i},n_i}$. Moreover, $X_{ir_{i},n_i}$ and $V_{i,n_i}$ are independent random variables with probability density functions given by
	$$f_{X_{ir_{i},n_i}}(x)=\frac{n_i!}{\sigma_i(r_i-1)!(n_i-r_i)!}\left[1-e^{-\frac{x-\mu_i}{\sigma_i}}\right]^{r_i-1}\left[e^{-\frac{(x-\mu_i)}{\sigma_i}}\right]^{n_{i}-r_{i}+1},\ \ x>\mu_i,\ \sigma_i>0.$$ and $$g_{V_{i,n_i}}(v)=\frac{1}{\Gamma(s_{i}-r_{i}){\sigma_i}^{s_{i}-r_{i}}}v^{s_{i}-r_{i}-1}e^{-\frac{v}{\sigma_i}},\ \ v>0,\ \sigma_i>0$$
	respectively.  Moreover, we have 
	$$\exp\left(-\frac{X_{ir_{i},n_i}-\mu_{i}}{\sigma_{i}}\right)\sim \mathrm{Beta}(n_{i}-r_{i}+1,r_{i})
	~~~\mbox{and }~~~
	\frac{2V_{i,n_{i}}}{\sigma_i} \sim \chi^2_{2(s_i-r_i)}$$
	
	In this paper, we will consider the SCI's of $\theta_{il}$. The proposed method is based on a parametric bootstrap procedure. The main contribution of this paper is the development of SCIs for the means under doubly Type-II censoring, together with rigorous theoretical justification. In particular, we investigate parametric bootstrap SCIs based on both the unbiased and maximum-likelihood estimators of $\theta_{il}$. For construction of the SCIs, first we uses an unbiased estimator of the variance of the pairwise differences, while the second uses the maximum of the corresponding unbiased variance estimators. The resulting procedures are theoretically justified. Simulation results also shows that the proposed approach perform satisfactorily.

	Rest of the paper is organized as follows. Section \ref{II} develops  the proposed parametric bootstrap SCIs based on unbiased estimators, considering both of the variance estimation schemes. Section \ref{IV} presents simultaneous fiducial generalized confidence interval procedures. Section \ref{V} reports the results of the extensive simulation study assessing the finite-sample performance of the proposed methods, while Section \ref{VI} presents real data applications to demonstrate their practical applicability and effectiveness.
	
	\section{Parametric bootstrap SCIs based on unbiased estimator} \label{II}
	In this section, we develop parametric bootstrap SCIs based on the unbiased estimators for the pairwise differences of the parameter of interest. The SCIs are constructed using two variance estimation schemes: the first uses an unbiased estimator of the variance of the pairwise difference, while the second uses the maximum of the corresponding unbiased variance estimators. The details of the proposed procedures are presented below.
	\subsection{Parametric bootstrap SCIs using the pairwise difference variance}
	In this subsection we will consider the parametric bootstrap SCI of the parameters $\theta_{il}$  based unbiased estimator of $\theta_i$ with an unbiased estimator of the variance of the pairwise difference. 
	After some  simplification we get   
	$$\mathbb{E}(X_{ir_{i},n_i})=\mu_i + \sigma_i \sum_{j=n_i-r_i+1}^{n_i}\frac{1}{j} \  \ \mbox{and} \ \ \mathrm{Var}(X_{ir_{i},n_i})
	=\sigma_i^2\sum_{j=n_i-r_i+1}^{n_i}\frac{1}{j^2}$$ 
	$$\mathbb{E}(V_{i,n_i}) =(s_i-r_i)\sigma_{i} \  \ \mbox{and} \ \  \mathrm{Var}(V_{i,n_{i}})=(s_i-r_i)\sigma_i^2.$$
	So an unbiased estimator of $\theta_i$ is 
	$$\delta_{i} = X_{ir_{i},n_i} + \frac{V_{i,n_i}}{s_i-r_i}\left(1-\sum_{j=n_i-r_i+1}^{n_i}\frac{1}{j}\right), i =1, 2, \dots, k$$ 
	Note that 
	\small{\begin{equation*}
		\mathrm{Var}(\delta_i-\delta_l)
		={\sigma_{i}}^2\sum_{j=n_i-r_i+1}^{n_i}\frac{1}{j^2}+\frac{\sigma_i^2}{s_i-r_i}\left(1-\sum_{j=n_i-r_i+1}^{n_i}\frac{1}{j}\right)^2 +
		{\sigma_{l}}^2\sum_{j=n_l-r_l+1}^{n_l}\frac{1}{j^2}+\frac{\sigma_l^2}{s_l-r_l}\left(1-\sum_{j=n_l-r_l+1}^{n_l}\frac{1}{j}\right)^2,
	\end{equation*}}
	and also we know $$\mathbb{E}\left(\frac{{V^2_{i,n_i}}}{(s_i-r_i)(s_i-r_i+1)}\right) = \sigma_{i}^2.$$
	So an unbiased estimator of $\mathrm{Var}(\delta_i-\delta_l)$ is obtained as,
	
	\begin{align*}
		A_{il}&=\frac{{V^{2}_{i,n_i}}}{(s_i-r_i)(s_i-r_i+1)}\sum_{j=n_i-r_i+1}^{n_i}\frac{1}{j^2}+\frac{{V^{2}_{i,n_i}}}{(s_i-r_i)^2(s_i-r_i+1)}\left(1-\sum_{j=n_i-r_i+1}^{n_i}\frac{1}{j}\right)^2 \\
		&\ \ + \frac{{V^{2}_{l,n_l}}}{(s_l-r_l)(s_l-r_l+1)}\sum_{j=n_l-r_l+1}^{n_l}\frac{1}{j^2}+
		\frac{{V^{2}_{l,n_l}}}{(s_l-r_l)^2(s_l-r_l+1)}\left(1-\sum_{j=n_l-r_l+1}^{n_l}\frac{1}{j}\right)^2.
	\end{align*}
	
	\noindent Define
	\begin{equation} \label{bae1.1}
		T_n:= \max_{i\neq l} \left| \frac{(\delta_i-\delta_l)-(\theta_i - \theta_l)}{\sqrt{A_{il}}}\right|
	\end{equation} 
	
	\noindent The approximate $100(1-\alpha)\%$ two-sided SCI's for $\theta_{i}-\theta_l (i\neq l)$ are 
	$$(\delta_i-\delta_l)\pm q_{\alpha}\sqrt{A_{il}}, i,l = 1, 2, \dots, k(i\neq l),$$ 
	where $q_{\alpha}$ is the approximate $(1-\alpha)$-th quantile of the random variable $T_{n}$.

	Finding exact distribution of $T_{n}$ is very difficult. Therefore we cannot obtain the exact value of $q_{\alpha}$ and hence we cannot construct the SCIs directly. To overcome this difficulty, we use a parametric bootstrap approach based on the distributional property of $T_{n}$. Notice that the distribution of $T_n$ does not depend on the location parameters $\mu_i$'s. Therefore, without loss of generality, we take $\mu_i= 0$ for all $i$. Using these fact, we define the PB version of $T_n$ as
	{\footnotesize\begin{equation} \label{bae1.2}
			T^{\mbox{PB}}_n :=\max_{i \neq l} \left|\frac{ \left(X^{\mbox{PB}}_{ir_{i},n_i} + \frac{V^{\mbox{PB}}_{i,n_i}}{s_i-r_i}\left(1-\sum \limits_{j=n_i-r_i+1}^{n_i}\frac{1}{j}\right)\right)-\left(X^{\mbox{PB}}_{lr_{l},n_l} + \frac{V^{\mbox{PB}}_{l,n_l}}{s_l-r_l}\left(1-\sum\limits_{j=n_l-r_l+1}^{n_l}\frac{1}{j}\right)\right) -(v_i-v_l)}{\sqrt{A^{\mbox{PB}}_{il}}}\right|,
	\end{equation}}
	where $\exp\left(-\frac{X^{\mbox{PB}}_{ir_{i},n_i}}{v_{i}}\right)\sim \mathrm{Beta}(n_{i}-r_{i}+1,r_{i})$,  $\frac{2V^{\mbox{PB}}_{i,n_{i}}}{v_i} \sim \chi^2_{2(s_i-r_i)}$ and $v_i$ is the observed value of $V_i, ~~i=1, 2, \dots, k$ with
	\begin{align*}
		A^{\mbox{PB}}_{il}&=\frac{({V^{\mbox{PB}}_{i,n_i}})^2}{(s_i-r_i)(s_i-r_i+1)}\sum_{j=n_i-r_i+1}^{n_i}\frac{1}{j^2}+\frac{({V^{\mbox{PB}}_{i,n_i}})^2}{(s_i-r_i)^2(s_i-r_i+1)}\left(1-\sum_{j=n_i-r_i+1}^{n_i}\frac{1}{j}\right)^2 \\
		& \ \ + \frac{({V^{\mbox{PB}}_{l,n_l}})^2}{(s_l-r_l)(s_l-r_l+1)}\sum_{j=n_l-r_l+1}^{n_l}\frac{1}{j^2}+
		\frac{({V^{\mbox{PB}}_{l,n_l}})^2}{(s_l-r_l)^2(s_l-r_l+1)}\left(1-\sum_{j=n_l-r_l+1}^{n_l}\frac{1}{j}\right)^2.
	\end{align*}
	Hence, the $100(1-\alpha)\%$ two-sided parametric bootstrap  SCIs of $\theta_i-\theta_l\ \ (i \neq l)$  are given by 
	\begin{equation}\label{2.3}
		(\delta_i-\delta_l) \pm q^{\mbox{PB}}_{\alpha,n} \sqrt{A_{il}}, i,l=1,2,\dots,k(i \neq l)	
	\end{equation}
	where $q^{\mbox{PB}}_{\alpha,n}$ is the $(1-\alpha)$th quantile of the distribution of $T^{\mbox{PB}}_{n}$. The value of $q^{\mbox{PB}}_{\alpha,n}$ is obtained using the following computational procedure.
	
	\noindent \textbf{Computational Procedure:} \label{C1}
	Consider $k$  independent  samples from the corresponding $k$ two-parameter exponential population.
	\begin{itemize}
		\item [(i)] Fix the censoring proportions $p_i$ and $q_i$, and determine $r_i-1=n_ip_i$, $n_i-s_i=n_iq_i$ for each $k$ sample and obtain corresponding $k$ doubly type-ii censored sample.
		\item [(ii) ]From the observed censored data, compute $v_i,$ which is the observed value of $V_i,i=1,2,\dots,k.$
		\item [(iii)] For each sample, generate $X^{\mbox{PB}}_{ir_{i},n_i}$ from $\text{exp}\left(-\frac{X^{\mbox{PB}}_{ir_{i},n_i}}{v_i}\right) \sim
		\text{Beta}(n_i-r_i+1,r_i) $ and $V^{\mbox{PB}}_{i}\sim \frac{v_i}{2}\chi^2_{2(s_i-r_i)}.$ Using these generated bootstrap samples, calculate the bootstrap statistic $T^{\mbox{PB}}_{n}.$
		\item [(iv)] Repeat step (iii) large number of times, say $N,$ to obtain $N$ bootstrap values of $T^{\mbox{PB}}_{n}.$
		\item [(v)] From these $N$ values, obtain its $(1-\alpha)$th quantile as an estimate of $q^{n,PB}_{\alpha}$.
	\end{itemize}
	
	\begin{theorem} \label{T2}
		Let $X_{ir_{i},n_{i}},~ X_{ir_{i+1},n{i}},~ \dots,~ X_{is_{i},n_{i}}, ~i=1,2,\dots,k$ be $k$ independent doubly Type-II censored random samples from the two-parameter exponential distribution $\mathrm{Exp}(\mu_i, \sigma_i)$.  Assume that  $r_{i}-1=n_{i}p_{i}$  and $n_{i}-s_{i}=n_{i}q_{i}$. Suppose that  $\frac{n_i}{n} \to c_i \in (0,1), i=1,2,\dots,k$ as $n \to \infty$, where $n = n_1+n_2+\dots+n_k$. Then 
		$$\mathbb{P}\left(\theta_i-\theta_l \in \left(\delta_i-\delta_l \pm q^{n,PB}_{\alpha}\sqrt{A_{il}}\right) ~~\forall~ i \neq l\right) \to 1- \alpha.$$
	\end{theorem}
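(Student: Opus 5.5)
The plan is to show that $T_n$ and its bootstrap version $T^{\mathrm{PB}}_n$ converge in distribution to the same continuous limit. Conditionally on the data, this limit must hold for $T^{\mathrm{PB}}_n$ in probability. Then $q^{n,PB}_\alpha$ converges in probability to the $(1-\alpha)$ quantile $q_\alpha$ of the limit, and Slutsky's theorem gives $\mathbb{P}(T_n\le q^{n,PB}_\alpha)\to 1-\alpha$. This probability is exactly the simultaneous coverage probability in the statement.

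First I would write everything through pivotal quantities. Let $Z_i=(X_{ir_i,n_i}-\mu_i)/\sigma_i$, so that $e^{-Z_i}\sim\mathrm{Beta}(n_i-r_i+1,r_i)$, and let $W_i=V_{i,n_i}/\sigma_i\sim \mathrm{Gamma}(s_i-r_i,1)$; these are independent. Write $m_i=s_i-r_i$, $a_i=\sum_{j=n_i-r_i+1}^{n_i}1/j$ and $b_i=\sum_{j=n_i-r_i+1}^{n_i} 1/j^2$. Then
\[
\delta_i-\theta_i=\sigma_i\Bigl[(Z_i-a_i)+(1-a_i)\Bigl(\tfrac{W_i}{m_i}-1\Bigr)\Bigr],
\]
and $A_{il}$ equals $\sigma_i^2 W_i^2 c_{i}+\sigma_l^2W_l^2c_l$ with deterministic $c_i$. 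The bootstrap statistic has exactly the same form, with $\sigma_i$ replaced by the observed $v_i$ and with fresh independent copies $(Z_i^*,W_i^*)$ of the pivots. Provided the centering in (\ref{bae1.2}) is read as the bootstrap analogue $v_i/m_i$ of $\theta_i$, the ratio becomes scale-equivariant. Hence $T^{\mathrm{PB}}_n$, conditionally on the data, has the distribution of $T_n$ evaluated at parameters $(0,\hat\sigma)$ with $\hat\sigma_i=v_i/m_i$, and $T_n$ itself has the distribution $F_n(\cdot;\sigma)$ that depends only on the ratios $\sigma_i/\sigma_l$.

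Second, I would derive the asymptotics. As $n_i\to\infty$ with $r_i-1=n_ip_i$, we have $a_i\to -\log(1-p_i)$ and $n_ib_i\to p_i/(1-p_i)$. The central order statistic satisfies $\sqrt{n_i}(Z_i-a_i)\Rightarrow N(0,p_i/(1-p_i))$; this follows from the Beta representation, or from the Rényi representation as a sum of independent exponentials with variance $\sum 1/j^2=b_i$ and Lindeberg's condition. We also have $m_i/n_i\to 1-p_i-q_i>0$ and $\sqrt{m_i}(W_i/m_i-1)\Rightarrow N(0,1)$, while $W_i/m_i\to 1$ in probability, so $n_iA_{il}$ is consistent for $n_i\mathrm{Var}(\delta_i-\delta_l)$. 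Using $n_i/n\to c_i\in(0,1)$, the vector $\sqrt n(\delta_i-\theta_i)_i$ converges to independent normals $N(0,\tau_i^2\sigma_i^2)$ with $\tau_i^2=\bigl[p_i/(1-p_i)+(1+\log(1-p_i))^2/(1-p_i-q_i)\bigr]/c_i$. It follows that $T_n\Rightarrow T(\sigma):=\max_{i\ne l}|Y_i-Y_l|/\sqrt{\tau_i^2\sigma_i^2+\tau_l^2\sigma_l^2}$, where the $Y_i\sim N(0,\tau_i^2\sigma_i^2)$ are independent. The limit $T(\sigma)$ is a continuous function of $\sigma$ and has a continuous, strictly increasing distribution function on $(0,\infty)$, since it is the maximum of finitely many nondegenerate Gaussian absolute values. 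Hence its quantile is well defined.

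Third, the bootstrap step, which I expect to be the main obstacle. Since $\hat\sigma_i\to\sigma_i$ in probability, I would show that $\sup_x|F_n(x;\hat\sigma)-F(x;\sigma)|\to0$ in probability. Joint convergence along sequences suffices: if $\sigma^{(n)}\to\sigma$ deterministically, then $F_n(\cdot;\sigma^{(n)})\Rightarrow F(\cdot;\sigma)$. This holds because the pivots' distributions do not depend on $\sigma$ at all, so $T_n(\sigma^{(n)})$ is a continuous function of the normalized pivots and of $\sigma^{(n)}$, and the continuous mapping theorem applies. Pólya's theorem upgrades this to uniform convergence, and a subsequence argument (almost-sure convergence of $\hat\sigma$ along sub-subsequences) gives convergence in probability. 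Then $q^{n,PB}_\alpha\to q_\alpha(\sigma)$ in probability, because the limit distribution function is continuous and strictly increasing at $q_\alpha$. Finally, Slutsky's theorem applied to $T_n - q^{n,PB}_\alpha$ yields $\mathbb{P}(T_n\le q^{n,PB}_\alpha)\to F(q_\alpha;\sigma)=1-\alpha$.
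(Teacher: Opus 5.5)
Your overall strategy is correct and takes a different, more careful route than the paper. The paper shows only that each $T_{n,il}$ and each $T^{\mathrm{PB}}_{n,il}$ is asymptotically $\mathcal{N}(0,1)$, and then invokes the continuous mapping theorem. Marginal limits do not determine the law of $\max_{i\neq l}|T_{n,il}|$: contrasts sharing an index are correlated, with correlations depending on the ratios of the $\tau_i\sigma_i$. The paper also never argues conditionally on the data when it asserts $q^{n,PB}_\alpha\to q_\alpha$. Your argument supplies what the paper leaves out:
\begin{itemize}
\item the exact decomposition of $\delta_i-\theta_i$ centered at $a_i$, which removes the need for the $\sqrt{n_i}$-rate in Lemma~\ref{A2}(ii);
\item the joint convergence of the independent vector $\sqrt{n}(\delta_i-\theta_i)_i$;
\item the conditional step via deterministic sequences $\sigma^{(n)}\to\sigma$, P\'olya's theorem, sub-subsequences and quantile convergence.
\end{itemize}

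The step that fails as written is your reconciliation with (\ref{bae1.2}). There $X^{\mathrm{PB}}_{ir_i,n_i}$ and $V^{\mathrm{PB}}_{i,n_i}$ are generated with scale $v_i$ itself. So the bootstrap mean of $X^{\mathrm{PB}}_{ir_i,n_i}+(1-a_i)V^{\mathrm{PB}}_{i,n_i}/m_i$ is $v_ia_i+v_i(1-a_i)=v_i$. The centering $v_i-v_l$ is therefore already exact, and the bootstrap world is $(0,v)$, not $(0,\hat\sigma)$. If you re-read the centering as $v_i/m_i-v_l/m_l$ while keeping scale $v_i$, you leave a shift generically of order $n$ against a standard error of order $\sqrt{n}$, so $T^{\mathrm{PB}}_n$ would diverge.

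What must change is the generating scale. Draw $X^{\mathrm{PB}}_{ir_i,n_i}$ and $V^{\mathrm{PB}}_{i,n_i}$ with scale $\hat\sigma_i=v_i/m_i$ and center at $\hat\sigma_i-\hat\sigma_l$. Only then is the conditional law of $T^{\mathrm{PB}}_n$ equal to $F_n(\cdot;\hat\sigma)$.

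This is not cosmetic. $F_n(\cdot;s)$ is homogeneous of degree zero in $s$, so your own third step applied to the literal procedure gives $F_n(\cdot;v)=F_n(\cdot;v/n)\to F(\cdot;\tilde\sigma)$ uniformly in probability. Here $\tilde\sigma_i=c_i(1-p_i-q_i)\sigma_i$ with $c_i=\lim n_i/n$. For $k=2$ this limit is the $|\mathcal{N}(0,1)|$ law either way. For $k\ge 3$ it generally differs from $F(\cdot;\sigma)$ when the products $c_i(1-p_i-q_i)$ are not all equal.

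So state explicitly that you are proving the theorem for the bootstrap run at scale $\hat\sigma_i$. The paper tacitly assumes the same thing in (\ref{E12}): it claims $\sqrt{nA^{\mathrm{PB}}_{il}}$ converges to the same constant as $\sqrt{nA_{il}}$, which is true only after replacing $v_i$ by $v_i/m_i$. With this correction your proof goes through.
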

	\begin{proof}
		Note that 
		$$\mathbb{P}\left(\theta_i-\theta_l \in \big(\delta_i-\delta_l \pm q^{n,PB}_{\alpha}\sqrt{A_{il}}\big)~~ \forall ~i \neq l\right) = \mathbb{P}(T_n \leq q^{n,PB}_{\alpha} ),$$ 
		where $T_n$ is defined in (\ref{bae1.1}). To establish that the proposed SCIs achieve the desired asymptotic coverage probability, it is sufficient to show $T_{n}$ and  $T^{\mbox{PB}}_n$ have the same limiting distribution as $n \to \infty$, where $T^{\mbox{PB}}_n$ is defined in (\ref{bae1.2}). Define
		\begin{equation}\label{E15}
			T_{n,il}=\frac{(\delta_i-\delta_l)-(\theta_i-\theta_l)}{\sqrt{A_{il}}}, i \neq l, i,l=1,2,\dots,k
		\end{equation}
		and $$T^{\mbox{PB}}_{n,il}=\frac{ \left(X^{\mbox{PB}}_{ir_{i},n_i} + \frac{V^{\mbox{PB}}_{i,n_i}}{s_i-r_i}\left(1-\sum_{j=n_i-r_i+1}^{n_i}\frac{1}{j}\right)\right)-\left(X^{\mbox{PB}}_{lr_{l},n_l} + \frac{V^{\mbox{PB}}_{l,n_l}}{s_l-r_l}\left(1-\sum_{j=n_l-r_l+1}^{n_l}\frac{1}{j}\right)\right) -(v_i-v_l)}{\sqrt{A^{\mbox{PB}}_{il}}}.$$ 
		By the continuous mapping theorem, it suffices to show that
		$T_{n,il}$ and $T^{\mbox{PB}}_{n,il}$ have the same distribution as $n \to \infty$. 
		By Theorem \ref{A1}  we get, as $n_i \to \infty$,  
		\begin{equation}\label{E1}
			\sqrt{n_i}(X_{ir_i,n_i}-\mu_i + \sigma_i \ln(1-p_i))\xlongrightarrow{d} \mathcal{N}\Big(0,\frac{\sigma_i^2 p_i}{1-p_i}\Big)
		\end{equation}
		Also it can be easily seen that
		\begin{equation}\label{E2}
			\sqrt{n_i}\Bigg(\frac{V_{i,n_i}}{s_i-r_i}-\sigma_i\Bigg) \xlongrightarrow{d} \mathcal{N}\Bigg(0,\frac{\sigma_i^2}{1-p_i-q_i}\Bigg)
		\end{equation}
		After some simplification, we have
		\begin{align*}
			\sqrt{n_i}(\delta_{i}-\theta_i)&=\sqrt{n_i}\left(X_{ir_i,n_i}-\mu_{i}+\sigma_{i}\ln(1-p_{i})\right)+\left(1-\sum\limits_{n_i-r_i+1}^{n_i}\frac{1}{j}\right)\sqrt{n_i} \left(\frac{V_{i,n_i}}{s_i-r_i}- \sigma_i\right)\\
			&~~~~~-\sigma_{i}\sqrt{n_i}\left(\ln(1-p_i)+\sum\limits_{n_i-r_i+1}^{n_i}\frac{1}{j}\right)
		\end{align*}
		Using Equations (\ref{E1}), (\ref{E2}) and Lemma \ref{A2}, we obtain
		$$\sqrt{n_i}(\delta_{i}-\theta_i)\xlongrightarrow{d}\mathcal{N}\left(0,\sigma_i^2\left(\frac{p_i}{1-p_i}+\frac{\left(1+\ln(1-p_i)\right)^2}{1-p_i-q_i}\right)\right).$$ 
		Consequently we obtain,  
		$$\sqrt{n}(\delta_{i}-\theta_i)\xlongrightarrow{d}\mathcal{N}\left(0,\frac{\sigma_i^2}{c_i}\left(\frac{p_i}{1-p_i}+\frac{\left(1+\ln(1-p_i)\right)^2}{1-p_i-q_i}\right)\right)$$
		as $\frac{n_i}{n}\to c_i$, where $0<c_i<1$. 
		Similarly, for $\frac{n_l}{n}\to c_l$, where $0<c_l<1$, we get $$\sqrt{n}(\delta_l-\theta_l)\xlongrightarrow{d}\mathcal{N}\left(0,\frac{\sigma_l^2}{c_l}\left(\frac{p_l}{1-p_l}+\frac{\left(1+\ln(1-p_l)\right)^2}{1-p_l-q_l}\right)\right)$$ 
		Since $\delta_{i}$ and $\delta_l$ are independent so we obtain 
		\begin{equation}\label{E5}
			\sqrt{n}\left[(\delta_{i}-\delta_l)-(\theta_i-\theta_l)\right]
			\xlongrightarrow{d}\mathcal{N}\left(0,\frac{\sigma_i^2}{c_i}\left(\frac{p_i}{1-p_i}+\frac{\left(1+\ln(1-p_i)\right)^2}{1-p_i-q_i}\right)+\frac{\sigma_l^2}{c_l}\left(\frac{p_l}{1-p_l}+\frac{\left(1+\ln(1-p_l)\right)^2}{1-p_l-q_l}\right)\right)
		\end{equation}
		Now we will analyse the term $nA_{il}.$ We know that
		\begin{align*}
			nA_{il}&=\frac{{V^{2}_{i,n_i}}}{(s_i-r_i)(s_i-r_i+1)}\sum_{j=n_i-r_i+1}^{n_i}\frac{n}{j^2}+\frac{n{V^{2}_{i,n_i}}}{(s_i-r_i)^2(s_i-r_i+1)}\left(1-\sum_{j=n_i-r_i+1}^{n_i}\frac{1}{j}\right)^2 \\
			&\ \ + \frac{{V^{2}_{l,n_l}}}{(s_l-r_l)(s_l-r_l+1)}\sum_{j=n_l-r_l+1}^{n_l}\frac{n}{j^2}+
			\frac{n{V^{2}_{l,n_l}}}{(s_l-r_l)^2(s_l-r_l+1)}\left(1-\sum_{j=n_l-r_l+1}^{n_l}\frac{1}{j}\right)^2.
		\end{align*}
		A straight forward calculations  shows that
		as $n_i\to\infty$ and $\frac{n_i}{n}\to c_i$, we get 
		\begin{equation}\label{E3}
			\frac{V_{i,n_i}}{s_i-r_i}\xlongrightarrow{P}\sigma_i,~~
			\frac{s_i-r_i}{s_i-r_i+1}\to1,~~
			\sum_{j=n_i-r_i+1}^{n_i}\frac{n_i}{j^2}\to\frac{p_i}{1-p_i}
		\end{equation}
		and 
		\begin{equation}\label{E4}
			\frac{n}{s_i-r_i+1}\to\frac{1}{c_i(1-p_i-q_i)},~~
			\sum_{j=n_i-r_i+1}^{n_i}\frac{n}{j^2}\to\frac{p_i}{c_i(1-p_i)},\quad
			\left(1-\sum_{j=n_i-r_i+1}^{n_i}\frac{1}{j}\right)^2\to\left(1+\ln(1-p_i)\right)^2
		\end{equation}
		Applying the limits in  (\ref{E3}) and (\ref{E4}),  we get
		\begin{equation}\label{E16}
			nA_{il} \xlongrightarrow{P} \frac{\sigma_i^2}{c_i}\left(\frac{p_i}{1-p_i}+\frac{\left(1+\ln(1-p_i)\right)^2}{1-p_i-q_i}\right)+\frac{\sigma_l^2}{c_l}\left(\frac{p_l}{1-p_l}+\frac{\left(1+\ln(1-p_l)\right)^2}{1-p_l-q_l}\right).
		\end{equation}
		and hence 
		\begin{equation}\label{E6}
			\sqrt{nA_{il}} \xlongrightarrow{P} \left[\frac{\sigma_i^2}{c_i}\left(\frac{p_i}{1-p_i}+\frac{\left(1+\ln(1-p_i)\right)^2}{1-p_i-q_i}\right)+\frac{\sigma_l^2}{c_l}\left(\frac{p_l}{1-p_l}+\frac{\left(1+\ln(1-p_l)\right)^2}{1-p_l-q_l}\right)\right]^{\frac{1}{2}}
		\end{equation}  
		Therefore, combining Equations (\ref{E5}), (\ref{E6}) and applying Slutsky's theorem, we conclude that 
		$$T_{n,il}=\frac{(\delta_i-\delta_l)-(\theta_i-\theta_l)}{\sqrt{A_{il}}} \xlongrightarrow{d} \mathcal{N}(0,1)$$ 
		Now we analyse the asymptotic distribution of parametric bootstrap statistics. 
		$$T^{\mbox{PB}}_{n,il}=\frac{ \left(X^{\mbox{PB}}_{ir_{i},n_i} + \frac{V^{\mbox{PB}}_{i,n_i}}{s_i-r_i}\left(1-\sum\limits_{j=n_i-r_i+1}^{n_i}\frac{1}{j}\right)\right)-\left(X^{\mbox{PB}}_{lr_{l},n_l} + \frac{V^{\mbox{PB}}_{l,n_l}}{s_l-r_l}\left(1-\sum\limits_{j=n_l-r_l+1}^{n_l}\frac{1}{j}\right)\right) -(v_i-v_l)}{\sqrt{A^{\mbox{PB}}_{il}}}.$$ 
		We have $X^{\mbox{PB}}_{ir_i,n_i} \overset{d}{=}  v_iZ_{ir_i,n_i}$ where $Z_{ir_i,n_i}$ denotes the $r_{i}$-th order statistic from $\text{Exp}(1)$ sample and $V^{\mbox{PB}}_{i,n_i}\overset{d}{=} \frac{v_i}{2}M_i$ with $M_i \sim \chi^2_{2(s_i-r_i)}$. Hence
		$$T^{\mbox{PB}}_{n,il}\overset{d}{=} \frac{v_i\left(Z_{ir_{i},n_i} + \frac{M_i}{2(s_i-r_i)}\left(1-\sum_{j=n_i-r_i+1}^{n_i}\frac{1}{j}\right)-1\right)-v_l\left(Z_{lr_{l},n_l} + \frac{M_l}{2(s_l-r_l)}\left(1-\sum_{j=n_l-r_l+1}^{n_l}\frac{1}{j}\right)-1\right)}{\sqrt{A^{\mbox{PB}}_{il}}},$$
		where 
		\begin{align*}
			&A^{\mbox{PB}}_{il}\overset{d}{=} \frac{{v_i}^2M^2_i}{4(s_i-r_i)(s_i-r_i+1)}\sum_{j=n_i-r_i+1}^{n_i}\frac{1}{j^2}+\frac{{v_i}^2M^2_i}{4(s_i-r_i)^2(s_i-r_i+1)}\left(1-\sum_{j=n_i-r_i+1}^{n_i}\frac{1}{j}\right)^2 \\
			&+ \frac{{v^2_lM^2_l}}{4(s_l-r_l)(s_l-r_l+1)}\sum_{j=n_l-r_l+1}^{n_l}\frac{1}{j^2}+
			\frac{{v^{2}_{l}}M_l^2}{4(s_l-r_l)^2(s_l-r_l+1)}\left(1-\sum_{j=n_l-r_l+1}^{n_l}\frac{1}{j}\right)^2.
		\end{align*}
		By Theorem \ref{A1}, as $n_i \to \infty$, 
		\begin{equation} \label{E7}
			\sqrt{n_i}(Z_{ir_i,n_i}+ \ln(1-p_i))\xlongrightarrow{d} \mathcal{N}\Big(0,\frac{p_i}{1-p_i}\Big).
		\end{equation}
		Also we have
		\begin{equation}\label{E8}
			\sqrt{n_i}\Big(\frac{M_i}{2(s_i-r_i)}-1\Big) \xlongrightarrow{d} \mathcal{N}\Big(0,\frac{1}{1-p_i-q_i}\Big).	
		\end{equation}
		After some simplification, we get 
		\begin{align*}
			\sqrt{n_i}\left(Z_{ir_{i},n_i} +
			\frac{M_i}{2(s_i-r_i)}\left(1-\sum_{j=n_i-r_i+1}^{n_i}\frac{1}{j}\right)-1\right)&= \sqrt{n_i}\left(Z_{ir_{i},n_i}+\ln(1-p_i)\right)\\
			&+\left(1-\sum\limits_{j=n_i-r_i+1}^{n_i}\frac{1}{j}\right)\sqrt{n_i}\left(\frac{M_i}{2(s_i-r_i)-1}\right)\\
			&-\sqrt{n_i}\left(\ln (1-p_i)+\sum\limits_{j=n_i-r_i+1}^{n_i}\frac{1}{j}\right)
		\end{align*}
		As an application of Equations (\ref{E7}), (\ref{E8}) and Lemma \ref{A2}, we obtain 
		$$\sqrt{n_i}\left(Z_{ir_{i},n_i} +
		\frac{M_i}{2(s_i-r_i)}\left(1-\sum_{j=n_i-r_i+1}^{n_i}\frac{1}{j}\right)-1\right)\xlongrightarrow{d}\mathcal{N}\left(0,\frac{p_i}{1-p_i}+\frac{\left(1+\ln(1-p_i)\right)^2}{1-p_i-q_i}\right)$$ 
		Since $\frac{n_i}{n}\to c_i$, where $0<c_i<1$, it follows that
		\begin{equation}\label{E9}
			\sqrt{n}\left(Z_{ir_{i},n_i} +
			\frac{M_i}{2(s_i-r_i)}\left(1-\sum_{j=n_i-r_i+1}^{n_i}\frac{1}{j}\right)-1\right)\xlongrightarrow{d}\mathcal{N}\left(0,\frac{1}{c_i}\left(\frac{p_i}{1-p_i}+\frac{\left(1+\ln(1-p_i)\right)^2}{1-p_i-q_i}\right)\right)
		\end{equation}
		Similarly we get , 
		\begin{equation}\label{E10}
			\sqrt{n}\left(Z_{lr_{l},n_l} +
			\frac{M_l}{2(s_l-r_l)}\left(1-\sum_{j=n_l-r_l+1}^{n_l}\frac{1}{j}\right)-1\right)\xlongrightarrow{d}\mathcal{N}\left(0,\frac{1}{c_l}\left(\frac{p_l}{1-p_l}+\frac{\left(1+\ln(1-p_l)\right)^2}{1-p_l-q_l}\right)\right)
		\end{equation} 
		As a consequence of independence we get
		\begin{align*}
			&\sqrt{n}\left[\left(Z_{ir_{i},n_i}+\frac{M_i}{2(s_i-r_i)}\left(1-\sum_{j=n_i-r_i+1}^{n_i}\frac{1}{j}\right)-1\right)-\left(Z_{lr_{l},n_l}+\frac{M_l}{2(s_l-r_l)}\left(1-\sum_{j=n_l-r_l+1}^{n_l}\frac{1}{j}\right)-1\right)\right] \\
			&\xlongrightarrow{d}\mathcal{N}\left(0,\frac{1}{c_i}\left(\frac{p_i}{1-p_i}+\frac{\left(1+\ln(1-p_i)\right)^2}{1-p_i-q_i}\right)+\frac{1}{c_l}\left(\frac{p_l}{1-p_l}+\frac{\left(1+\ln(1-p_l)\right)^2}{1-p_l-q_l}\right)\right)
		\end{align*}
		Again we have
		\begin{equation} \label{E11}
			\frac{M_{i}}{2(s_i-r_i)}\xlongrightarrow{P}1
		\end{equation}
		Therefore, using (\ref{E3}), (\ref{E4}), (\ref{E11}) and applying continuous mapping theorem, we have
		\begin{equation} \label{E12}
			\sqrt{nA^{\mbox{PB}}_{il}} \xlongrightarrow{P} \left[\frac{\sigma_i^2}{c_i}\left(\frac{p_i}{1-p_i}+\frac{\left(1+\ln(1-p_i)\right)^2}{1-p_i-q_i}\right)+\frac{\sigma_l^2}{c_l}\left(\frac{p_l}{1-p_l}+\frac{\left(1+\ln(1-p_l)\right)^2}{1-p_l-q_l}\right)\right]^{\frac{1}{2}}
		\end{equation}   
		Consequently as an application of  Slutsky's theorem, we conclude that 
		$$T^{\mbox{PB}}_{n,il} \xlongrightarrow{d} \mathcal{N}(0,1)$$ 
		So both have the same limiting distribution. Note that since the limiting distribution of $T^{\mbox{PB}}_{n,il}$ is continuous, we have $q^{n,PB}_{\alpha} \to q_{\alpha}$, as $n \to \infty$, where $q_\alpha$ is the $(1-\alpha)$-th quantile of the limiting distribution. Therefore as $n \to \infty$, $\mathbb{P}\{T_{n,il} \leq q^{n,PB}_{\alpha}\} \to \mathbb{P}\{Z \leq q_{\alpha}\} = 1-\alpha,$ where $Z$ is the limiting distribution. This completes the proof.
		
	\end{proof}
	
	\subsection{Parametric bootstrap SCIs using maximum of individual variances} \label{III}
	In the previous subsection, the SCIs were constructed using the variance of differences that is $\mathrm{Var}(\delta_i-\delta_l)$. In this section, instead of using the variance of difference,
	we use the maximum of the individual variances, $\max\{\mathrm{Var}(\delta_i),\mathrm{Var}(\delta_l)\}$.
	An unbiased estimator of $\mathrm{Var}(\delta_i)$ is $$A_i=\frac{{V^{2}_{i,n_i}}}{(s_i-r_i)(s_i-r_i+1)}\sum_{j=n_i-r_i+1}^{n_i}\frac{1}{j^2}+
	\left(1-\sum_{j=n_i-r_i+1}^{n_i}\frac{1}{j}\right)^2\frac{{V^{2}_{i,n_i}}}{(s_i-r_i)^2(s_i-r_i+1)}$$
	Define
	\begin{equation} \label{E13}
		T^{*}_n:= \max_{i\neq l} \left| \frac{(\delta_i-\delta_l)-(\theta_i - \theta_l)}{\sqrt{M_{il}}}\right|,
	\end{equation} 
	where $M_{il} = \max\{A_{i},A_{l}\}$. Also the parametric bootstrap version of $T^*_n$ is  {\footnotesize\begin{equation} \label{bae 2.2}
			T^{*^{\mbox{PB}}}_n=\max_{i \neq l} \left|\frac{ \left(X^{\mbox{PB}}_{ir_{i},n_i} + \left(1-\sum_{j=n_i-r_i+1}^{n_i}\frac{1}{j}\right)\frac{V^{\mbox{PB}}_{i,n_i}}{s_i-r_i}\right)-\left(X^{\mbox{PB}}_{lr_{l},n_l} + \left(1-\sum_{j=n_l-r_l+1}^{n_l}\frac{1}{j}\right)\frac{V^{\mbox{PB}}_{l,n_l}}{s_l-r_l}\right) -(v_i-v_l)}{\sqrt{M^{\mbox{PB}}_{il}}}\right|,
	\end{equation}}
	where $\exp\left(-\frac{X^{\mbox{PB}}_{ir_{i},n_i}}{v_{i}}\right)\sim \mathrm{Beta}(n_{i}-r_{i}+1,r_{i})$,  $\frac{2V^{\mbox{PB}}_{i,n_{i}}}{v_i} \sim \chi^2_{2(s_i-r_i)}$ and $v_i$ is the observed value of $V_i, ~~i=1, 2, \dots, k$ with
	\begin{align*}
		M^{\mbox{PB}}_{il}	&=
		\max\Bigg\{\frac{({V^{\mbox{PB}}_{i,n_i}})^2}{(s_i-r_i)(s_i-r_i+1)}\sum_{j=n_i-r_i+1}^{n_i}\frac{1}{j^2}+\left(1-\sum_{j=n_i-r_i+1}^{n_i}\frac{1}{j}\right)^2\frac{({V^{\mbox{PB}}_{i,n_i}})^2}{(s_i-r_i)^2(s_i-r_i+1)},\\
		&\qquad\qquad\frac{({V^{\mbox{PB}}_{l,n_l}})^2}{(s_l-r_l)(s_l-r_l+1)}\sum_{j=n_l-r_l+1}^{n_l}\frac{1}{j^2}	+\left(1-\sum_{j=n_l-r_l+1}^{n_l}\frac{1}{j}\right)^2\frac{({V^{\mbox{PB}}_{l,n_l}})^2}
		{(s_l-r_l)^2(s_l-r_l+1)}
		\Bigg\}.
	\end{align*}

	Hence, the $100(1-\alpha)\%$ two-sided parametric bootstrap  SCIs of $(\theta_i-\theta_l),~ i \neq l$  are given by 
	\begin{equation}\label{3.3}
		(\delta_i-\delta_l) \pm q^{\mbox{PB}}_{\alpha,n} \sqrt{M_{il}}, i,l=1,2,\dots,k(i \neq l)
	\end{equation} 
	where $q^{\mbox{PB}}_{\alpha,n}$ is the $(1-\alpha)$th quantile of the distribution of $T^{*^{\mbox{PB}}}_n$. To obtain $q^{\mbox{PB}}_{\alpha,n}$, we use the same type of computational algorithm as described in the previous section. For completeness, the computational procedure is given below.
	
	\noindent \textbf{Computational procedure:} \label{C2}
	Consider $k$  independent  samples from the corresponding $k$ two-parameter exponential population.
	\begin{itemize}
		\item [(i)] Fix the censoring proportions $p_i$ and $q_i$, and determine $r_i-1=n_ip_i$, $n_i-s_i=n_iq_i$ for each $k$ sample and obtain corresponding $k$ doubly type-ii censored sample.
		\item [(ii) ]From the observed censored data, compute $v_i,$ which is the observed value of $V_i,i=1,2,\dots,k.$
		\item [(iii)] For each sample, generate $X^{\mbox{PB}}_{ir_{i},n_i}$ from $\text{exp}\left(-\frac{X^{\mbox{PB}}_{ir_{i},n_i}}{v_i}\right) \sim
		\text{Beta}(n_i-r_i+1,r_i) $ and $V^{\mbox{PB}}_{i}\sim \frac{v_i}{2}\chi^2_{2(s_i-r_i)}.$ Using these generated bootstrap samples, calculate the bootstrap statistic $T^{*^{\mbox{PB}}}_n.$
		\item [(iv)] Repeat step (iii) large number of times, say $N,$ to obtain $N$ bootstrap values of $T^{*^{\mbox{PB}}}_n.$
		\item [(v)] From these $N$ values, obtain its $(1-\alpha)$th quantile as an estimate of $q^{n,PB}_{\alpha}$.
	\end{itemize}

	\begin{theorem}\label{T3}
		Let $X_{ir_{i},n_{i}},~ X_{ir_{i+1},n{i}},~ \dots,~ X_{is_{i},n_{i}}, ~i=1,2,\dots,k$ be $k$ independent doubly Type-II censored samples from the two-parameter exponential distribution $\mathrm{Exp}(\mu_i, \sigma_i)$.  Assume that  $r_{i}-1=n_{i}p_{i}$  and $n_{i}-s_{i}=n_{i}q_{i}$. Suppose that  $\frac{n_i}{n} \to c_i \in (0,1), i=1,2,\dots,k$ as $n \to \infty$, where $n = n_1+n_2+\dots+n_k$. Then 
		$$\mathbb{P}\left(\theta_i-\theta_l \in \left(\delta_i-\delta_l \pm q^{n,PB}_{\alpha}\sqrt{M_{il}}\right) ~~\forall~ i \neq l\right) \to 1- \alpha.$$
	\end{theorem}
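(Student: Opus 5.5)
The plan mirrors the proof of Theorem \ref{T2}, replacing $A_{il}$ by $M_{il}=\max\{A_i,A_l\}$. Since the $\max$ is a continuous map, the limits computed there carry over almost unchanged.

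First, the coverage event is exactly $\{T^*_n\le q^{n,PB}_\alpha\}$. So it suffices to show that the vector $(T^*_{n,il})_{i\neq l}$, with $T^*_{n,il}=((\delta_i-\delta_l)-(\theta_i-\theta_l))/\sqrt{M_{il}}$, and its bootstrap analogue $(T^{*PB}_{n,il})_{i\neq l}$ have the same continuous limiting law. Then $T^*_n$ and $T^{*PB}_n$ share a continuous limit, by the continuous mapping theorem applied to $\max|\cdot|$.

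For the numerators I reuse the expansion already obtained: $\sqrt n(\delta_i-\theta_i)\to\mathcal N(0,\sigma_i^2\tau_i^2/c_i)$, where
\[
\tau_i^2=\frac{p_i}{1-p_i}+\frac{(1+\ln(1-p_i))^2}{1-p_i-q_i}.
\]
By independence across $i$, the joint vector $\sqrt n(\delta_1-\theta_1,\dots,\delta_k-\theta_k)$ converges to a centered Gaussian with diagonal covariance $\mathrm{diag}(\sigma_i^2\tau_i^2/c_i)$. Every pairwise difference is a linear image of this vector, so the differences converge jointly. For the denominators, the limits (\ref{E3})–(\ref{E4}) give $nA_i\xrightarrow{P}\sigma_i^2\tau_i^2/c_i$. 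By continuity of $\max$, $nM_{il}\xrightarrow{P}\max\{\sigma_i^2\tau_i^2/c_i,\ \sigma_l^2\tau_l^2/c_l\}=:m_{il}>0$. Slutsky's theorem then yields joint convergence of $(T^*_{n,il})$ to $(W_i-W_l)/\sqrt{m_{il}}$, where the $W_i\sim\mathcal N(0,\sigma_i^2\tau_i^2/c_i)$ are independent. Note this limit is \emph{not} standard normal, which is why joint convergence, rather than marginal, is the thing to track.

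For the bootstrap side I use the representations $X^{PB}_{ir_i,n_i}\overset d= v_iZ_{ir_i,n_i}$ and $V^{PB}_{i,n_i}\overset d=\frac{v_i}{2}M_i$. Conditionally on the data, the numerator becomes $v_i(\cdot)-v_l(\cdot)$, with the bracketed terms obeying (\ref{E9})–(\ref{E10}) jointly over $i$ by independence. The denominator becomes $n\max\{v_i^2(\cdots),v_l^2(\cdots)\}$. Here the key point is the scale. The ratio $T^{*PB}$ is invariant under rescaling all $v_j$ by a common factor, and the relevant comparison is of $v_i/(s_i-r_i)$, the estimate of $\sigma_i$ that appears. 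I will rewrite the statistic in terms of $\hat\sigma_i=v_i/(s_i-r_i)$ (absorbing the factor $s_i-r_i$ into the normalisation of $Z$ and $M_i$, as the definition of $T^{PB}$ intends). Since $\hat\sigma_i\xrightarrow{P}\sigma_i$ by (\ref{E3}), and using (\ref{E11}), the conditional limit of $(T^{*PB}_{n,il})$ is $(\sigma_iG_i\tau_i/\sqrt{c_i}-\sigma_lG_l\tau_l/\sqrt{c_l})/\sqrt{m_{il}}$ with $G_i$ i.i.d. $\mathcal N(0,1)$. This is the same law as the limit of $(T^*_{n,il})$. The convergence holds in probability (conditionally on the data), by a subsequence argument: extract a subsequence along which $\hat\sigma_i\to\sigma_i$ a.s., then apply Slutsky.

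Finally, the common limit $T^*=\max_{i\neq l}|\cdot|$ of a nondegenerate Gaussian vector has a continuous distribution function that is strictly increasing near its $(1-\alpha)$-quantile $q_\alpha$. Pólya's theorem then gives uniform convergence of the conditional bootstrap cdf, so $q^{n,PB}_\alpha\xrightarrow{P}q_\alpha$. Combined with $T^*_n\xrightarrow d T^*$ via Slutsky, this yields $\mathbb P(T^*_n\le q^{n,PB}_\alpha)\to 1-\alpha$.

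The main obstacle is the bootstrap step: handling the data-dependent scale $v_i$ correctly, and establishing joint rather than marginal convergence. The limit is a non-standard maximum over a correlated, non-unit-variance Gaussian vector, so it cannot simply be identified with $\mathcal N(0,1)$ as was done for each coordinate in Theorem \ref{T2}.
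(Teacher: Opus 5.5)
Your skeleton matches the paper's: asymptotic normality of $\sqrt n[(\delta_i-\delta_l)-(\theta_i-\theta_l)]$, then $nM_{il}\to m_{il}$ in probability by continuity of $\max$, then Slutsky, then ``similarly'' for the bootstrap. Where you add detail, you are right and the paper is not. With $a_i=\sigma_i^2\tau_i^2/c_i$, the limit of $T^*_{n,il}$ has variance $(a_i+a_l)/\max\{a_i,a_l\}$; the paper writes $\sqrt{\max}$ in the denominator. Matching marginals of the $T^*_{n,il}$ does not determine the law of $\max_{i\neq l}$ once $k\ge 3$, so the joint convergence you track is needed. The quantile step also needs the conditional-convergence and P\'olya argument you supply.

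\textbf{The gap: the bootstrap rescaling.} $T^{*\mathrm{PB}}_n$ is invariant only under a \emph{common} rescaling of the $v_j$. But $v_i=(s_i-r_i)\hat\sigma_i$, with a factor that depends on the group. Multiplying $Z_{ir_i,n_i}$ and $M_i$ by $s_i-r_i$ changes their laws, so (\ref{E9})--(\ref{E11}) no longer apply to the rescaled quantities.

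Suppose you keep the literal definition: generate with scale $v_i$ and centre at $v_i-v_l$. Then your own computation gives the conditional limit with $a_i$ replaced by $d_i^2a_i$, where $d_i=c_i(1-p_i-q_i)$ is the limit of $v_i/(n\sigma_i)$. The resulting variance $(d_i^2a_i+d_l^2a_l)/\max\{d_i^2a_i,d_l^2a_l\}$ differs from $(a_i+a_l)/\max\{a_i,a_l\}$ in general. They agree when all $d_i$ are equal, e.g.\ equal sample sizes and censoring proportions.

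Concretely, take $k=2$, $p_i=q_i=0$, $\sigma_1=2$, $\sigma_2=1$, $c_1=1/3$, $c_2=2/3$. The target limit variance is $9/8$, while the bootstrap limit variance is $3/2$. So the bootstrap critical value is asymptotically too large by the factor $\sqrt{4/3}$, and the coverage tends to a limit strictly above $1-\alpha$.

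The statement therefore cannot be proved for the bootstrap as literally defined. What your argument establishes is the theorem for the bootstrap generated with scale $\hat\sigma_i=v_i/(s_i-r_i)$ and centred at $\hat\sigma_i-\hat\sigma_l$. Say so explicitly, as a correction of the definition rather than a rewriting of it. The paper's proof makes the same identification silently: its (\ref{E12}) treats $v_i$ as though it tended to $\sigma_i$. With the correction stated, your argument is sound and more complete than the paper's.
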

	
	\noindent \textbf{Proof:}
	Note that $$\mathbb{P}\left(\theta_i-\theta_l \in \big(\delta_i-\delta_l \pm q^{n,PB}_{\alpha}\sqrt{M_{il}}\big)~~ \forall ~i \neq l\right) = \mathbb{P}(T_n \leq q^{n,PB}_{\alpha} ),$$  where $T^*_n$ is defined in (\ref{E13}). To establish that the proposed SCIs achieve the desired asymptotic coverage probability, it is sufficient to show $T^*_{n}$ and  $T^{*^{\mbox{PB}}}_n$ have the same limiting distribution as $n \to \infty$, where $T^{*^{\mbox{PB}}}_n$ is defined in (\ref{bae 2.2}). By the continuous mapping theorem, it further suffices to show that
	$T^*_{n,il}$ and $T^{*^{\mbox{PB}}}_{n,il}$ have the same distribution as $n \to \infty$, where  $T^*_{n,il}$ and $T^{*^{\mbox{PB}}}_{n,il}$ are defined as
	$$T^{*}_{n,il}=\frac{(\delta_i-\delta_l)-(\theta_i-\theta_l)}{\sqrt{M_{il}}}, i \neq l, i,l=1,2,\dots,k$$  and $$T^{*^{\mbox{PB}}}_{n,il}=\frac{ \left(X^{\mbox{PB}}_{ir_{i},n_i} + \left(1-\sum_{j=n_i-r_i+1}^{n_i}\frac{1}{j}\right)\frac{V^{\mbox{PB}}_{i,n_i}}{s_i-r_i}\right)-\left(X^{\mbox{PB}}_{lr_{l},n_l} + \left(1-\sum_{j=n_l-r_l+1}^{n_l}\frac{1}{j}\right)\frac{V^{\mbox{PB}}_{l,n_l}}{s_l-r_l}\right) -(v_i-v_l)}{\sqrt{M^{\mbox{PB}}_{il}}}.$$ 
	From the proof of the previous result, we have 
	\begin{equation}\label{M1}
		\sqrt{n}\left[(\delta_{i}-\delta_l)-(\theta_i-\theta_l)\right]
		\xlongrightarrow{d}\mathcal{N}\left(0,\frac{\sigma_i^2}{c_i}\left(\frac{p_i}{1-p_i}+\frac{\left(1+\ln(1-p_i)\right)^2}{1-p_i-q_i}\right)+\frac{\sigma_l^2}{c_l}\left(\frac{p_l}{1-p_l}+\frac{\left(1+\ln(1-p_l)\right)^2}{1-p_l-q_l}\right)\right)	
	\end{equation}
	and 
	\begin{equation}\label{M2}
		nM_{il} \xlongrightarrow{P} \max \left\{\frac{\sigma_i^2}{c_i}\left(\frac{p_i}{1-p_i}+\frac{\left(1+\ln(1-p_i)\right)^2}{1-p_i-q_i}\right), \frac{\sigma_l^2}{c_l}\left(\frac{p_l}{1-p_l}+\frac{\left(1+\ln(1-p_l)\right)^2}{1-p_l-q_l}\right)\right\}	
	\end{equation}
	Therefore, combining Equations (\ref{M1}), (\ref{M2}) and applying Slutsky's theorem, we conclude that

	$$T^{*}_{n,il}\xlongrightarrow{d} \mathcal{N}\left(0, \frac{\frac{\sigma_i^2}{c_i}\left(\frac{p_i}{1-p_i}+\frac{\left(1+\ln(1-p_i)\right)^2}{1-p_i-q_i}\right)+\frac{\sigma_l^2}{c_l}\left(\frac{p_l}{1-p_l}+\frac{\left(1+\ln(1-p_l)\right)^2}{1-p_l-q_l}\right)}{\sqrt{\max\Bigg\{\frac{\sigma_i^2}{c_i}\left(\frac{p_i}{1-p_i}+\frac{\left(1+\ln(1-p_i)\right)^2}{1-p_i-q_i}\right),\frac{\sigma_l^2}{c_l}\left(\frac{p_l}{1-p_l}+\frac{\left(1+\ln(1-p_l)\right)^2}{1-p_l-q_l}\right)\Bigg\}}}\right) $$
	
	\noindent Similarly we can prove that
	$$T^{*^{\mbox{PB}}}_{n,il}\xlongrightarrow{d} \mathcal{N}\left(0, \frac{\frac{\sigma_i^2}{c_i}\left(\frac{p_i}{1-p_i}+\frac{\left(1+\ln(1-p_i)\right)^2}{1-p_i-q_i}\right)+\frac{\sigma_l^2}{c_l}\left(\frac{p_l}{1-p_l}+\frac{\left(1+\ln(1-p_l)\right)^2}{1-p_l-q_l}\right)}{\sqrt{\max\Bigg\{\frac{\sigma_i^2}{c_i}\left(\frac{p_i}{1-p_i}+\frac{\left(1+\ln(1-p_i)\right)^2}{1-p_i-q_i}\right),\frac{\sigma_l^2}{c_l}\left(\frac{p_l}{1-p_l}+\frac{\left(1+\ln(1-p_l)\right)^2}{1-p_l-q_l}\right)\Bigg\}}}\right) $$
	So, both have the same limiting distribution. Hence the theorem is proved.
	
	\section{Simultaneous fiducial generalized confidence intervals}\label{IV}
	In this section, we develop simultaneous fiducial generalized confidence intervals (FGCIs) for pairwise differences of means 
	$$ \theta_{il} = \theta_{i} -\theta_{l},~~ i,l=1,2,\dots,k,\ \ i \neq l. $$
	We will consider the fiducial generalized pivotal quantity (FGPQ) approach of  \cite{hannig2006fiducial}. 
	Let $X^*_{ir_i,n_i}$, $V^*_i$ denote independent copies of $X_{ir_i,n_i}$, $V_i$, respectively. It is well known that  $X^*_{ir_i,n_i}$ and $V^*_i$  are independently distributed with $$X^*_{ir_i,n_i} \overset{d}{=} \mu_i + \sigma_{i}Z^*_i$$ where $Z^*_i$ is the $r_i$-th order statistic of $\mathrm{Exp}(1)$ random variables and $$V^*_{i,n_i} \overset{d}{=}  \frac{\sigma_i}{2}W_i, ~~W_i \sim \chi^2_{2(s_i-r_i)}$$
	Cosider the FGPQs of $\mu_i$ and $\sigma_{i}$, as
	$$R_{\mu_i}=X_{ir_i,n_i}-(X^*_{ir_i,n_i}-\mu_i)\frac{V_{i,n_i}}{V^*_{i,n_i}} ~~,~~ R_{\sigma_i}=\frac{V_{i,n_i}}{V^*_{i,n_i}}\sigma_i,~ i=1,2,\dots,k$$
	Consequently, for $\theta_{i}=\mu_{i}+\sigma_{i},$ the  FGPQ is 
	$$R_{\theta_{i}}(X,X^*,\delta)=X_{ir_i,n_i}-(X^*_{ir_i,n_i}-\mu_i-\sigma_i)\frac{V_{i,n_i}}{V^*_{i,n_i}}$$ 
	Thus, the FGPQ for the pairwise difference $\theta_{il}=\theta_{i}-\theta_{l}$ is defined as 
	$$R_{\theta_{il}}(X,X^*,\delta)=R_{\theta_{i}}(X,X^*,\delta)-R_{\theta_{l}}(X,X^*,\delta).$$ 
	To construct simultaneous confidence intervals, define
	\begin{equation} \label{F1}
		F_n= \max_{i\neq l} \left|\frac{(\delta_i-\delta_l)-R_{\theta_{il}}
			(X,X^*,\delta)}{\sqrt{F_{il}}}\right|,
	\end{equation}  
	where $F_{il}$ is an unbiased estimator of the variance of $\delta_i-\delta_l$ is given as
	\begin{align*}
		F_{il}&=\frac{{V^{2}_{i,n_i}}}{(s_i-r_i)(s_i-r_i+1)}\sum_{j=n_i-r_i+1}^{n_i}\frac{1}{j^2}+\left(1-\sum_{j=n_i-r_i+1}^{n_i}\frac{1}{j}\right)^2\frac{{V^{2}_{i,n_i}}}{(s_i-r_i)^2(s_i-r_i+1)} \\
		&\ \ + \frac{{V^{2}_{l,n_l}}}{(s_l-r_l)(s_l-r_l+1)}\sum_{j=n_l-r_l+1}^{n_l}\frac{1}{j^2}+
		\left(1-\sum_{j=n_l-r_l+1}^{n_l}\frac{1}{j}\right)^2\frac{{V^{2}_{l,n_l}}}{(s_l-r_l)^2(s_l-r_l+1)}.
	\end{align*}
	
	Hence, the $100(1-\alpha)\%$ two-sided FGCIs of $\theta_i-\theta_l\ \ (i \neq l)$  are given by 
	$$(\delta_i-\delta_l) \pm f_{\alpha,n} \sqrt{A_{il}}, i,l=1,2,\dots,k(i \neq l)$$ 
	where $f_{\alpha,n}$ is the $(1-\alpha)$th quantile of the distribution of $F_{n}$. The value of $f_{\alpha,n}$ is obtained using the following computational procedure.
	\vspace{0.25cm}

	\noindent \textbf{Computational Procedure:} Consider $k$  independent  samples from the corresponding $k$ two-parameter exponential population.
	\begin{itemize}
		\item [(i)] Fix the censoring proportions $p_i$ and $q_i$, and determine $r_i-1=n_ip_i$, $n_i-s_i=n_iq_i$ for each $k$ sample and obtain corresponding $k$ doubly type-ii censored sample.
		\item [(ii) ]From the observed censored data, compute the observed values $x_{ir_i,n_i}$ and $v_i,$ of $X_{ir_i,n_i}, V_i,i=1,2,\dots,k$ respectively and calculate the corresponding estimator $\delta_{i},i=1,2,\dots,k $.
		\item [(iii)] Generate a realizations of $R_{\theta_{il}}$ and then calculate the value of $F_n$ defined in (\ref{F1}).
		\item [(iv)] Repeat Step (iii) independently $N$ times. The empirical $(1-\alpha)$th quantile of the resulting $N$ values of $F_n$ is taken as the estimate of $f_{\alpha, n}$.
	\end{itemize}
	\begin{theorem}
		Let $X_{ir_{i},n_{i}},~ X_{ir_{i+1},n{i}},~ \dots,~ X_{is_{i},n_{i}}, ~i=1,2,\dots,k$ be $k$ independent doubly Type-II censored random samples from the two-parameter exponential distribution $\mathrm{Exp}(\mu_i, \sigma_i)$.  Assume that  $r_{i}-1=n_{i}p_{i}$  and $n_{i}-s_{i}=n_{i}q_{i}$. Suppose that  $\frac{n_i}{n} \to c_i \in (0,1), i=1,2,\dots,k$ as $n \to \infty$, where $n = n_1+n_2+\dots+n_k$. Then 
		$$\mathbb{P}\left(\theta_i-\theta_l \in \left(\delta_i-\delta_l \pm f_{\alpha, n}\sqrt{F_{il}}\right) ~~\forall~ i \neq l\right) \to 1- \alpha.$$
	\end{theorem}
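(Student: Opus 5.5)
The plan follows the proof of Theorem \ref{T2}. First I rewrite the coverage event as $\{T_n\le f_{\alpha,n}\}$, where
$$T_n=\max_{i\ne l}\frac{|(\delta_i-\delta_l)-(\theta_i-\theta_l)|}{\sqrt{F_{il}}}.$$
Here $F_{il}$ coincides with $A_{il}$, so $T_n$ is exactly the statistic in (\ref{bae1.1}). The proof of Theorem \ref{T2} already gives two facts. By (\ref{E16}), $nF_{il}\xrightarrow{P}\tau_i+\tau_l$, where
$$\tau_i=\frac{\sigma_i^2}{c_i}\left(\frac{p_i}{1-p_i}+\frac{(1+\ln(1-p_i))^2}{1-p_i-q_i}\right).$$
In addition, $\sqrt n(\delta_i-\theta_i)\xrightarrow{d}G_i\sim\mathcal N(0,\tau_i)$, and these limits are independent across $i$. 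Since the $k$ samples are independent, this marginal convergence upgrades to joint convergence of the vector $\big(\sqrt n(\delta_i-\theta_i)\big)_{i=1}^k$. The continuous mapping theorem applied to the max then gives
$$T_n\xrightarrow{d}T:=\max_{i\ne l}\frac{|G_i-G_l|}{\sqrt{\tau_i+\tau_l}}.$$
I will stress that one needs the joint limit of the vector, not only the marginal limits of each $T_{n,il}$, because the coverage is controlled by the maximum.

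The second step is to show that the fiducial statistic $F_n$ has the same limit $T$ conditionally on the data. Write $H_i=\sum_{j=n_i-r_i+1}^{n_i}1/j$, $a_i=V_{i,n_i}/(s_i-r_i)$ and $w_i=W_i/(2(s_i-r_i))$. Substituting $X^*_{ir_i,n_i}-\mu_i=\sigma_iZ^*_i$ and $V^*_{i,n_i}=\sigma_iW_i/2$, the parameter $\sigma_i$ cancels and
$$\delta_i-R_{\theta_i}=a_i\Big[(1-H_i)+\frac{Z^*_i-1}{w_i}\Big].$$
Expanding around $w_i=1$ and $Z^*_i=-\ln(1-p_i)$ gives
$$\sqrt{n}\,(\delta_i-R_{\theta_i})=a_i\Big[-\sqrt n\,(H_i+\ln(1-p_i))+\sqrt n\,(Z^*_i+\ln(1-p_i))+(1+\ln(1-p_i))\sqrt n\,(w_i-1)\Big]+o_P(1).$$
I then handle the terms one at a time:
\begin{itemize}
\item The first term vanishes by Lemma \ref{A2}.
\item The second term is asymptotically normal by (\ref{E7}).
\item The third term is asymptotically normal by (\ref{E8}).
\item The remainder is $o_P(1)$ because $w_i\xrightarrow{P}1$, as in (\ref{E11}).
\end{itemize}
Since $a_i\xrightarrow{P}\sigma_i$ by (\ref{E3}), and $Z^*_i$ and $W_i$ are independent of each other and of the data, the conditional limit of $\sqrt n(\delta_i-R_{\theta_i})$ is $\mathcal N(0,\tau_i)$ after the rescaling by $n_i/n\to c_i$. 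The limits are independent across $i$. The denominator $F_{il}$ is a function of the data only, and its limit is already known from the first step. Because the normal limit is symmetric, the sign in the numerator of $F_n$ is irrelevant. Hence the conditional law of $F_n$ converges to the law of $T$.

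The main obstacle is making this conditional statement rigorous, because $a_i$ and $F_{il}$ are random functions of the observed data. I would argue by subsequences. From any subsequence, extract a further subsequence along which $a_i\to\sigma_i$ and $nF_{il}\to\tau_i+\tau_l$ almost surely. Along it, the conditional law of $F_n$ given the data is, for almost every data path, a deterministic-parameter problem, and Slutsky's theorem applies to give the limit $T$. This yields conditional weak convergence of $F_n$ in probability.

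The final step is the quantiles. The law of $T$ is continuous and strictly increasing near its $(1-\alpha)$ quantile $t_\alpha$, so by Pólya's theorem $f_{\alpha,n}\xrightarrow{P}t_\alpha$. Combining this with $T_n\xrightarrow{d}T$ through Slutsky's theorem gives $\mathbb P(T_n\le f_{\alpha,n})\to\mathbb P(T\le t_\alpha)=1-\alpha$.
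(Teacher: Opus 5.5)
Your proposal is correct. It keeps the paper's skeleton: write the coverage as $\mathbb P(T_n\le f_{\alpha,n})$, show that $T_n$ and $F_n$ have a common limit, then pass to quantiles. It differs from the paper exactly where the paper's argument is loose.

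\emph{Joint versus marginal limits.} The paper reduces the problem to showing, pair by pair, that $T_{n,il}\xrightarrow{d}\mathcal N(0,1)$ and $F_{n,il}\xrightarrow{d}\mathcal N(0,1)$, and then appeals to the continuous mapping theorem for the maximum. Marginal limits of the pair statistics do not determine the law of their maximum. Your joint convergence of $(\sqrt n(\delta_i-\theta_i))_i$, obtained from independence of the samples, is what identifies the correct common limit $T=\max_{i\ne l}|G_i-G_l|/\sqrt{\tau_i+\tau_l}$.

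\emph{Conditional versus unconditional fiducial limit.} The paper writes the fiducial numerator as $V_{i,n_i}/V^*_{i,n_i}$ times an independent-copy version of $\delta_i-\theta_i$. It then uses $V_{i,n_i}/V^*_{i,n_i}\xrightarrow{P}1$ and Slutsky, treating data and fiducial randomness jointly. This recycles the asymptotics of Theorem~\ref{T2} without a new expansion. However, $f_{\alpha,n}$ is a quantile of the law of $F_n$ given the data, so an unconditional limit is not quite what is needed. Your representation $\delta_i-R_{\theta_i}=a_i[(1-H_i)+(Z_i^*-1)/w_i]$ is algebraically the same quantity. It makes the cancellation of $\sigma_i$ explicit and confines the data dependence to $a_i$ and $F_{il}$, which is exactly what lets your subsequence argument give conditional convergence in probability.

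\emph{Quantile step.} You get $f_{\alpha,n}\xrightarrow{P}t_\alpha$ from continuity and strict monotonicity of the law of $T$, then conclude by Slutsky. This replaces the paper's closing sentence, which is carried over from the bootstrap proof and refers to a single pair.

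\emph{Small points to tighten.}
\begin{itemize}
\item The expansion remainder needs $Z_i^*\xrightarrow{P}-\ln(1-p_i)$ and $\sqrt n(w_i-1)=O_P(1)$, not only $w_i\xrightarrow{P}1$.
\item Strict monotonicity deserves one line. For example, $\{T\le t\}=\{G\in tK\}$, where $K$ is a symmetric convex set with nonempty interior and $G$ is a nondegenerate Gaussian vector, since each $\tau_i>0$.
\end{itemize}

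The cost of your route is one delta-method expansion. In exchange you prove the simultaneous statement rather than a per-pair one.
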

	\begin{proof}
		Note that 
		$$\mathbb{P}\left(\theta_i-\theta_l \in \big(\delta_i-\delta_l \pm f_{\alpha, n}\sqrt{F_{il}}\big)~~ \forall ~i \neq l\right) = \mathbb{P}(T_n \leq f_{\alpha, n}),$$
		where $T_n$ is defined in (\ref{bae1.1}). To establish that the proposed SCIs achieve the desired asymptotic coverage probability, it is sufficient to show $T_{n}$ and  $F_n$ have the same limiting distribution as $n \to \infty$, where $F_n$ is defined in (\ref{F1}). By the continuous mapping theorem, it further suffices to show that
		$T_{n,il}$ and $F_{n,il}$ have the same distribution as $n \to \infty$, where  $T_{n,il}$ and $F_{n,il}$ are defined as
		$$T_{n,il}=\frac{(\delta_i-\delta_l)-(\theta_i-\theta_l)}{\sqrt{A_{il}}}, i \neq l, i,l=1,2,\dots,k$$  and $$F_{n,il}=\frac{(\delta_{i}-\delta_{l})-R_{\theta_{il}}
			(X,X^*,\delta)}{\sqrt{A_{il}}}.$$ 

		As established in the proof of Theorem \ref{T2}, we have
		$$T_{n,il}=\frac{(\delta_i-\delta_l)-(\theta_i-\theta_l)}{\sqrt{A_{il}}} \xlongrightarrow{d} \mathcal{N}(0,1)$$ 
		Now we analyse the asymptotic distribution of FGPQ. Recall that 
		$$F_{n,il}=\frac{(\delta_{i}-\delta_{l})-R_{\theta_{il}}(X,X^*,\delta)}{\sqrt{A_{il}}}.$$
		We can write $$F_{n,il}=F_{n,i}-F_{n,l}$$. 
		
		First, consider the numerator of $F_{n,i}$. It can be expressed as 
		{\small\begin{align*}
			&\frac{V_{i,n_i}}{V^*_{i,n_i}}\left(\left(X^*_{i,r{i},n_{i}}-\mu_{i}-\sigma_{i}\right)+\frac{V^*_{i,n_{i}}}{s_i-r_i}\left(1-\sum_{n_i-r_i+1}^{n_i}\frac{1}{j}\right)\right)\\
			&= \frac{V_{i,n_i}}{V^*_{i,n_i}}\left(\left(X^*_{i,r{i},n_{i}}-\mu_{i}-\sigma_{i}\ln(1-p_i)\right)+\left(\frac{V^*_{i,n_{i}}}{s_i-r_i}-\sigma_{i}\right)\left(1-\sum_{n_i-r_i+1}^{n_i}\frac{1}{j}\right)-\sigma_{i}\left(\ln(1-p_i)+\sum_{j=n_i-r_i+1}^{n_i}\frac{1}{j}\right)\right).
		\end{align*}}
		Since $X^*_{ir_i,n_i}$ and $V^*_{i,n_i}$ are independent copy of $X_{ir_i,n_i}$ and $V_{i,n_i}$ respectively and have the same distributions, the asymptotic results obtained previously can be applied directly. Hence $$\sqrt{n}\left(\left(X^*_{i,r{i},n_{i}}-\mu_{i}-\sigma_{i}\right)+\frac{V^*_{i,n_{i}}}{s_i-r_i}\left(1-\sum_{n_i-r_i+1}^{n_i}\frac{1}{j}\right)\right) \xlongrightarrow{d}\mathcal{N}\left(0,\frac{\sigma_i^2}{c_i}\left(\frac{p_i}{1-p_i}+\frac{\left(1+\ln(1-p_i)\right)^2}{1-p_i-q_i}\right)\right).$$ Similarly, $$\sqrt{n}\left(\left(X^*_{l,r{l},n_{l}}-\mu_{l}-\sigma_{l}\right)+\frac{V^*_{l,n_{l}}}{s_l-r_l}\left(1-\sum_{n_l-r_l+1}^{n_l}\frac{1}{j}\right)\right) \xlongrightarrow{d}\mathcal{N}\left(0,\frac{\sigma_l^2}{c_l}\left(\frac{p_l}{1-p_l}+\frac{\left(1+\ln(1-p_l)\right)^2}{1-p_l-q_l}\right)\right).$$
		Also, 
		$$\frac{V_{i,n_i}}{V^*_{i,n_i}} \xlongrightarrow{P} 1.$$
		Therefore, by slutky's theorem, the numerator of $F_{n,il}$ satisfies
		$$\sqrt{n}\left(\mbox{numerator of} F_{n,il}\right) \xlongrightarrow{d}\mathcal{N}\left(0,\frac{\sigma_i^2}{c_i}\left(\frac{p_i}{1-p_i}+\frac{\left(1+\ln(1-p_i)\right)^2}{1-p_i-q_i}\right)+\frac{\sigma_l^2}{c_l}\left(\frac{p_l}{1-p_l}+\frac{\left(1+\ln(1-p_l)\right)^2}{1-p_l-q_l}\right)\right)$$
		Furthermore, from Equation (\ref{E16}), we get $$\text{denominator of} F_{n,i} =nA_{il} \xlongrightarrow{P} \frac{\sigma_i^2}{c_i}\left(\frac{p_i}{1-p_i}+\frac{\left(1+\ln(1-p_i)\right)^2}{1-p_i-q_i}\right)+\frac{\sigma_l^2}{c_l}\left(\frac{p_l}{1-p_l}+\frac{\left(1+\ln(1-p_l)\right)^2}{1-p_l-q_l}\right).$$ 
		Consequently, applying Slutsky's theorem, we obtain 
		$$F_{n,il} \xlongrightarrow{d} \mathcal{N}(0,1)$$ 
		So both have the same limiting distribution. Note that since the limiting distribution of $T^{\mbox{PB}}_{n,il}$ is continuous, we have $q^{n,PB}_{\alpha} \to q_{\alpha}$, as $n \to \infty$, where $q_\alpha$ is the $(1-\alpha)$-th quantile of the limiting distribution. Therefore as $n \to \infty$, $\mathbb{P}\{T_{n,il} \leq q^{n,PB}_{\alpha}\} \to \mathbb{P}\{Z \leq q_{\alpha}\} = 1-\alpha,$ where $Z$ is the limiting distribution. This completes the proof.
		
	\end{proof}
	
	\begin{remark}
		In Section \ref{IV}, all the results are derived using the unbiased estimator of $\theta_{il}$ together with an unbiased estimator of $\mbox{Var}(\theta_i-\theta_l)$. Alternatively, instead of using unbiased estimator of $\mbox{Var}(\theta_i-\theta_l)$, one may use the maximum of unbiased estimators of $\mbox{Var}(\theta_i)$ and $\mbox{Var}(\theta_l)$. The resulting procedure leads to the same as those obtained in Section \ref{III} for the parametric bootstrap method. In the present setting, the corresponding arguments apply to the fiducial method as well. The required results can be established using arguments similar to those given in the previous section. 
	\end{remark}
	\begin{remark}
		In Sections \ref{II} and \ref{IV}, the results were established using the unbiased estimator of $\theta_{il}$. The unbiased estimator was used in the corresponding expressions and derivations in both sections. The same results also hold when the maximum likelihood estimator (MLE) of $\theta_{il}$ is used. Indeed, by replacing the unbiased estimator with the MLE in the corresponding expressions, the results follow directly using the same arguments and theorems as above. Thus, all the results of the previous sections remain valid for the MLE as well.	
	\end{remark}
	
	\begin{remark}
		In many experimental studies, the subjects are divided into a control group and one or more treatment groups. The control group represents the existing condition, while the treatment group receives new conditions. The primary objective is often to determine whether the treatments produce a meaningful changed compared with the control group. In statistical analysis, this is commonly studied through the differences between the treatment and control group means.
		
		The proposed method can be directly extended to this setting. Let $i=1,2,\dots,k_1$ denote the treatment groups and $l=1,2,\dots,k_2$ denote the control groups, with corresponding population means $\theta_i$ and $\theta_l$, respectively. Our problem can therefore be viewed as a special case of treatment-control comparisons, where the parameter of interest are the treatment-control mean differences 
		$$\theta_{il}=\theta_{i}-\theta_{l},~~i=1,2,\dots,k_1,~~l=1,2,\dots,k_2$$
		Thus, all treatment-control mean differences are considered simultaneously.
		
		The parametric bootstrap procedures proposed in Section \ref{II} and fiducial procedure proposed in Section \ref{IV} can be used to construct simultaneous confidence intervals for $\theta_{il}$. The construction is essentially the same as that described therein, with the pairwise comparisons restricted to treatment-control pairs. Thus, the proposed method can be directly extended to simultaneous inference for treatment-control studies.
	\end{remark}	
	
	\begin{remark}
		The proposed methods can be naturally extended to construct
		SCIs for the pairwise differences between the location parameters of the $k$ populations. In the present setting, let $\mu_i$ denote the location parameter corresponding to the $i$-th population, for $i=1,2,\dots,k$. Therefore our parameter of interest is now the pairwise difference 
		$$\gamma_{il}= \mu_{i}-\mu_{l},~~i,l=1,2,\dots,k,~i \neq l.$$
		Thus, the objective is to construct SCIs for all pairwise differences 
		$\gamma_{il},~~i,l=1,2,\dots,k,~~i\neq l$, simultaneously. For the PB method, we use the same estimator of $\mu_{i}$ and same type of statistic as considered in the previous section. That is, no new estimator needs to be introduced the estimator of $\mu_i$, along with its associated variance estimator continues to play the same role here as it did in the case of the mean parameter. The corresponding statistic for the pairwise difference is defined as 
		\begin{equation} 
			T :=\max_{i \neq l} \left|\frac{ \left(X_{ir_{i},n_i} + \frac{V_{i,n_i}}{s_i-r_i}\sum \limits_{j=n_i-r_i+1}^{n_i}\frac{1}{j}\right)-\left(X_{lr_{l},n_l} + \frac{V_{l,n_l}}{s_l-r_l}\sum\limits_{j=n_l-r_l+1}^{n_l}\frac{1}{j}\right) -(\mu_i-\mu_l)}{\sqrt{A_{il}}}\right|,
		\end{equation}
		where $A_{il}$ is an unbiased estimator of $\mbox{Var}(\mu_i-\mu_l)$.  Therefore, the parametric bootstrap version of the $T$ is $T^{PB}$ which is defined by replacing each quantity in $T$ with its parametric bootstrap counterpart and setting $\mu_i, \mu_j$ to be zero as $T$ is independent of them as 
		\begin{equation} \label{bae 2.2}
			T^{^{\mbox{PB}}}=\max_{i \neq l} \left|\frac{ \left(X^{\mbox{PB}}_{ir_{i},n_i} +\frac{V^{\mbox{PB}}_{i,n_i}}{s_i-r_i}\sum\limits_{j=n_i-r_i+1}^{n_i}\frac{1}{j}\right)-\left(X^{\mbox{PB}}_{lr_{l},n_l} +\frac{V^{\mbox{PB}}_{l,n_l}}{s_l-r_l}\sum\limits_{j=n_l-r_l+1}^{n_l}\frac{1}{j}\right)}{\sqrt{A^{\mbox{PB}}_{il}}}\right|.
		\end{equation}
		
		The construction of the SCIs then proceeds in exactly the same manner as described in the previous section. The quantiles of the bootstrap distribution of $T^{PB}$ are obtained through Monte Carlo simulation and these quantiles are used in place of the unknown quantiles of $T$ to build the confidence intervals for each $\gamma_{il}$. In particular, the same theoretical results and the same parametric bootstrap arguments employed in the previous section can be used here  and all the results proved therein continue to hold in an identical manner for all pairwise differences of $\gamma_{il}$.
		
		This problem can also addressed using the fiducial method. For the fiducial method, all estimators are the same as those used in the PB method. Therefore, it suffices to define the corresponding fiducial statistic as
		\begin{equation} \label{F1}
			F_n= \max_{i\neq l} \left|\frac{\mu_i-\mu_l-R_{\mu_{il}}
				(X,X^*,\delta)}{\sqrt{F_{il}}}\right|,
		\end{equation} 
		where $F_{il}$ is an unbiased estimator of the variance of $(\mu_i-\mu_l)$, $$R_{\mu_i}(X,X^*,\delta)=X_{ir_i,n_i}-(X^*_{ir_i,n_i}-\mu_i)\frac{V_{i,n_i}}{V^*_{i,n_i}},$$
		and 
		$$R_{\mu_{il}}(X,X^*,\delta) =R_{\mu_l}(X,X^*,\delta)-R_{\mu_i}(X,X^*,\delta) ~ i,l=1,2,\dots,k,~~i\neq l.$$
		The procedure for constructing the simultaneous confidence intervals (SCIs) for $\gamma_{il}$ is the same as that described is Section \ref{IV}.
	\end{remark}
	
\section{Simulation Studies}\label{V}
In this section, we conduct simulation studies to assess the performance of the proposed PB SCIs for differences of means of several two-parameter exponential distributions based on censored samples. For comparison, the FG SCIs  are also considered. The performance of both methods is evaluated in terms of their empirical coverage probabilities and average volumes of the resulting confidence intervals. Specifically, the empirical coverage probabilities of the PB SCIs are obtained using the following simulation algorithm:

\vspace{0.25cm}

\noindent \textbf{Algorithm:}
\begin{itemize}
	\item [(i)]  Generate random samples $X_{i1}, \dots, X_{in_i}$ from $\mathrm{Exp}(\mu_i, \sigma_i)$, for $i=1,\dots,k$.
	\item [(ii)] Calculate the critical value
	$q^{n,PB}_{\alpha}$, using computational procedure described in Section \ref{II} with $N=10,000$ bootstrap replications.
	\item [(iii)] Using the samples generated in step (i) and the critical value $q^{n,PB}_{\alpha}$ obtained in step (ii), construct the SCIs according to (\ref{2.3}) and (\ref{3.3}). Record whether all the differences $\theta_i-\theta_l(i \neq l)$, are simultaneously contained in their corresponding confidence intervals.
	\item [(iv)]  Repeat steps (i)--(iii), $B=10,000$ times. The empirical coverage probability
	is then estimated by the proportion of simulation runs for which all the differences
	$\theta_i-\theta_l(i \neq l)$, are simultaneously contained in
	their corresponding SCIs.
\end{itemize}

The empirical coverage probabilities for the FG SCIs are obtained using a similar simulation procedures except for the different resampling schemes. For each procedure, the average volume (AV) of the SCIs is calculated as the mean of the products of the lengths of all individual confidence intervals. When $k=2$, the AV is reduces to the simply average lengths of the confidence intervals.

In the simulation, five configuration factors are considered to evaluate the performance of the two SCI procedures: the number of two parameter exponential populations, the sample sizes, the censoring proportion, the values of $\mu_i$ and $\theta_i$, $i=1,\dots,k$. The specific configurations considered for these factors are given inthe following table. A nominal confidence level of $95\%$ is used throughout the study. The simulation results for $k=2,3,4$ are reported in the following tables. For simplicity, CP is denotes the coverage probability in all three cases.

\noindent \textbf{Simulation results I:}
We first consider two cases with $k=2$ groups, under two different parameters settings, five censoring proportions and six different samples size configurations. It is observed from Tables \ref{table1}, \ref{table2}, \ref{table3} and \ref{table4} that, the coverage probabilities of both the PB and fiducial methods are close to the nominal level. The resulting confidence intervals are essentially the same for both methods, as evidence by their nearly identical AV. For both approaches, the choice of estimators has little effect on the result and the choice between the estimated sum and maximum of variances has a negligible effect for the PB method and no effect for the fiducial method. Under heavy censoring such as $(0.20,0,20)$ the AV increases. As the sample size increase, the AV tends to decrease.

\noindent \textbf{Simulation results II:} Analogous configurations are considered for $k=3$ groups. It is observed from Tables \ref{table5}, \ref{table6}, \ref{table7} and \ref{table8} that the coverage probabilities of both the PB and fiducial methods remains close to the nominal level. The AV is relatively large for smaller sizes and decreases as the sample size increases. A noticeable difference in AV between two methods are observed for smaller sample sizes and it is similar when sample size increases. The choice of estimator has only a minor effect on both CP and AV for both methods, although some differences are observed. However, the choice between the sum and max of the esitimated variance has a noticeable effect on the results for the both methods.
The effect of censoring and sample size are consistent with those observed for $k=2$. 

\noindent \textbf{Simulation results III:} For $k=4$ groups, one parameter setting, five censoring proportions and six different sample size configurations are considered. It is observed from Tables \ref{table9} and \ref{table10} that the CP of the PB methods remain close to the nominal level, whereas those of the fiducial methods are generally higher than the nominal level. Similar patterns are observed with respect to the choice of estimator, sample size, the Sum and Max of the estimated variances and the censoring proportions as in the $k=3 $ case. The main difference is that the AV of the fiducial methods are at least twice as large as those of the corresponding PB methods.

\noindent \textbf{Simulation results IV:} Overall considering all methods, sample size configurations, parameter settings and censoring proportions, the PB method based on the unbiased estimator with the sum of estimated variances provides the most consistent overall performance. Furthermore, large parameter values, particularly large scale parameters gives wider confidence intervals and hence larger AV.

\begin{table}[h!]
	\centering
	\footnotesize
	\caption{Average coverage probability (CP) and average volume (AV) of the
		confidence intervals for ${(\mu_1, \mu_2)}=(0.35, 1.65)$ and
		${(\sigma_1, \sigma_2)}=(1.25, 0.75)$ with $N=10{,}000$ and $B=10{,}000$.}
	\label{table1}
	\setlength{\tabcolsep}{5pt}
	\renewcommand{\arraystretch}{1.15}
	

	\vspace{2mm}
\end{table}

\begin{table}[h!]
	\centering
	\footnotesize
	\caption{Average coverage probability (CP) and average volume (AV) of the
		confidence intervals for ${(\mu_1, \mu_2)}=(0.35, 1.65)$ and
		${(\sigma_1, \sigma_2)}=(1.25, 0.75)$ with $N=10{,}000$ and $B=10{,}000$.}
	\label{table2}
	\setlength{\tabcolsep}{5pt}
	\renewcommand{\arraystretch}{1.15}
	
	%
\end{table}

\begin{table}[h!]
	\centering
	\footnotesize
	\caption{Average coverage probability (CP) and average volume (AV) of the
		confidence intervals for $(\mu_1, \mu_2)=(-0.75, -1.65),  (\sigma_1, \sigma_2)=(5.00, 3.75),$ with $N=10{,}000$ and $B=10{,}000$.}
	\label{table3}
	\setlength{\tabcolsep}{5pt}
	\renewcommand{\arraystretch}{1.15}
	
	%
	\vspace{2mm}
\end{table}

\begin{table}[h!]
	\centering
	\footnotesize
	\caption{Average coverage probability (CP) and average volume (AV) of the
		confidence intervals for ${(\mu_1,\mu_2)}=(-0.75,-1.65)$ and
		${(\sigma_1,\sigma_2)}=(5.00,3.75)$ with $M=10{,}000$ and $B=10{,}000$.}
	\label{table4}
	\setlength{\tabcolsep}{5pt}
	\renewcommand{\arraystretch}{1.15}
	
	%
\end{table}

\begin{table}[h!]
	\centering
	\footnotesize
	\caption{Average coverage probability (CP) and average volume (AV) of the
		confidence intervals for $(\mu_1, \mu_2, \mu_3)=(0,0,0)$ and
		$(\sigma_1, \sigma_2, \sigma_3)=(1,2,3)$ with $N=10{,}000$ and $B=10{,}000$.}
	\label{table5}
	\setlength{\tabcolsep}{4.5pt}
	\renewcommand{\arraystretch}{1.1}
	
	%
	\vspace{2mm}
\end{table}

\begin{table}[h!]
	\centering
	\footnotesize
	\caption{Average coverage probability (CP) and average volume (AV) of the
		confidence intervals for ${(\mu_1,\mu_2,\mu_3)}=(0,0,0)$ and
		${(\sigma_1,\sigma_2,\sigma_3)}=(1,2,3)$ with $M=10{,}000$ and $B=10{,}000$.}
	\label{table6}
	\setlength{\tabcolsep}{4pt}
	\renewcommand{\arraystretch}{1.15}
	
	%
\end{table}

\begin{table}[h!]
	\centering
	\footnotesize
	\caption{Average coverage probabilities (CP) and average volume (AV) of the
		confidence intervals for $(\mu_1, \mu_2, \mu_3)=(0.5,1,2)$ and
		$(\sigma_1, \sigma_2, \sigma_3)=(1,1.5,2)$ with $N=10{,}000$ and $B=10{,}000$.}
	\label{table7}
	\setlength{\tabcolsep}{4pt}
	\renewcommand{\arraystretch}{1.1}
	%
\end{table}

\begin{table}[h!]
	\centering
	\footnotesize
	\caption{Average coverage probability (CP) and average volume (AV) of the
		confidence intervals for ${(\mu_1,\mu_2,\mu_3)}=(0.5,1,2)$ and
		${(\sigma_1,\sigma_2,\sigma_3)}=(1,1.5,2)$ with $M=10{,}000$ and $B=10{,}000$.}
	\label{table8}
	\setlength{\tabcolsep}{4pt}
	\renewcommand{\arraystretch}{1.15}
	
	%
\end{table}

\begin{table}[h!]
	\centering 
	\caption{Average coverage probability (CP) and average volume (AV) of the
		confidence intervals for $(\mu_1,\mu_2, \mu_3, \mu_4)=(0.00,0.50,1.00,1.50)$ and
		$(\sigma_1, \sigma_2, \sigma_3, \sigma_4)=(1.00,1.10,1.20,1.30)$ with $N=10{,}000$ and $B=10{,}000$.} 
	\label{table9} 
	\setlength{\tabcolsep}{4.5pt} 
	\renewcommand{\arraystretch}{1.2} 
	\small
	\resizebox{\textwidth}{!}{
		%
%
	}
\end{table}

\begin{table}[h!] 
	\centering 
	\caption{Average coverage probability (CP) and average volume (AV) of the
		confidence intervals for $(\mu_1,\mu_2, \mu_3, \mu_4)=(0.00,0.50,1.00,1.50)$ and
		$(\sigma_1, \sigma_2, \sigma_3, \sigma_4)=(1.00,1.10,1.20,1.30)$ with $N=10{,}000$ and $B=10{,}000$.} 
	\label{table10} 
	\setlength{\tabcolsep}{4.5pt} 
	\renewcommand{\arraystretch}{1.2} 
	\small
	\resizebox{\textwidth}{!}{
		%
%
	}
\end{table}

\clearpage

\section{Real Data Analysis}\label{VI}
In this section, we illustrate the proposed simultaneous confidence interval methods using two real data sets. First data consists of breaking strength of jute fibers and was taken from \cite{xia2009study}. The data consists of $30$ observations for each of the three gauge lengths: $5$ mm, $10$ mm and $20$ mm. The data are given in table \ref{T:6}. The second data set consists of remission duration for patients receiving different drugs and was taken from \cite{malekzadeh2014comparing}. The data are presented in table \ref{T:10}. For both data set, the suitability of the two-parameter exponential distribution was verified for each group using the one -sample Kolmogorov-Smirnov test. The resulting K-S P-value are presented in tables \ref{T:7} and \ref{T:11}.

To investigate the applicability of the proposed methods in presence of censoring, doubly Type-II censored samples were generated from the complete data by considering the specified censoring scheme. The proposed parametric bootstrap SCI method and the FGPQ-based SCI method were then employed to construct simultaneous confidence intervals for all pairwise differences of means $\theta_{i}=\mu_i+\sigma_i~i=1,2,\dots,k$. The critical value $q^{n,PB}_{\alpha}$ and $F^{n}_{\alpha}$ were obtained using the procedures described in Sections \ref{II} and \ref{IV}. Using these critical values, two-sided SCIs were constructed for all pairwise differences of $\theta_{i}$. The resulting PB SCIs and FG SCIs are reported in the following tables. 

The length of the resulting SCIs were also compared to assess the relative efficiency of the two methods. The method giving shorter intervals is preferred. Overall, the result shows that the proposed two methods can be effectively applied to real data under censoring to obtain SCIs for the pairwise differences of means.

\begin{table}[H]
	\centering
	\caption{The breaking strengths of jute fiber at different lengths.}
	\label{T:6}
	\renewcommand{\arraystretch}{1.2}
	\setlength{\tabcolsep}{6pt}
	\begin{tabular}{c l}
		\toprule
		\textbf{Length} & \textbf{Breaking strengths} \\
		\midrule
		5 mm 
		& 566.31, 270.79, 516.48, 823.03, 226.53, 367.70, 185.42, 441.87, 618.57, 546.11 \\
		& 268.20, 315.33, 809.23, 218.86, 583.97, 304.84, 129.08, 537.45, 496.28, 167.87 \\
		& 306.99, 178.25, 370.02, 168.20, 554.61, 360.80, 260.97, 254.29, 495.51, 187.68 \\
		\midrule
		10 mm 
		& 693.73, 704.66, 323.83, 778.17, 123.06, 637.66, 383.43, 151.48, 108.94, 50.16 \\
		& 671.49, 183.16, 257.44, 727.23, 291.27, 101.15, 376.42, 163.40, 141.38, 700.74 \\
		& 262.90, 353.24, 422.11, 43.93, 590.48, 212.13, 303.90, 506.60, 530.55, 177.25 \\
		\midrule
		20 mm 
		& 71.46, 419.02, 284.64, 585.57, 456.60, 113.85, 187.85, 688.16, 662.66, 45.58 \\
		& 578.62, 756.70, 594.29, 166.49, 99.72, 707.36, 765.14, 187.13, 145.96, 350.70 \\
		& 547.44, 116.99, 375.81, 581.60, 119.86, 48.01, 200.16, 36.75, 244.53, 83.50 \\
		\bottomrule
	\end{tabular}
\end{table}

\begin{figure}[htbp]
	\centering
	\begin{minipage}{0.30\textwidth}
		\centering
		\includegraphics[height=4.75cm,width=4.75cm]{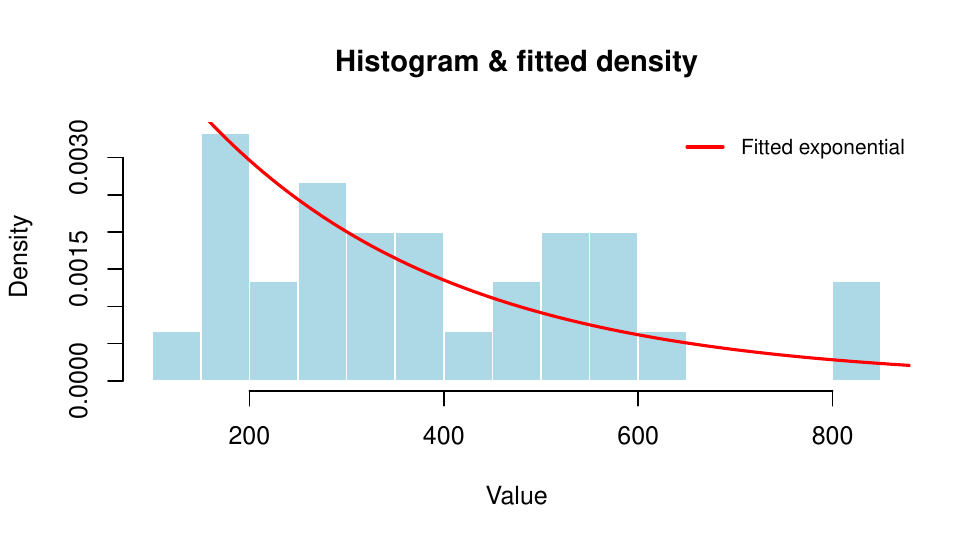}
	\end{minipage}
	\hspace{0.02\textwidth}
	\begin{minipage}{0.30\textwidth}
		\centering
		\includegraphics[height=4.75cm,width=4.75cm]{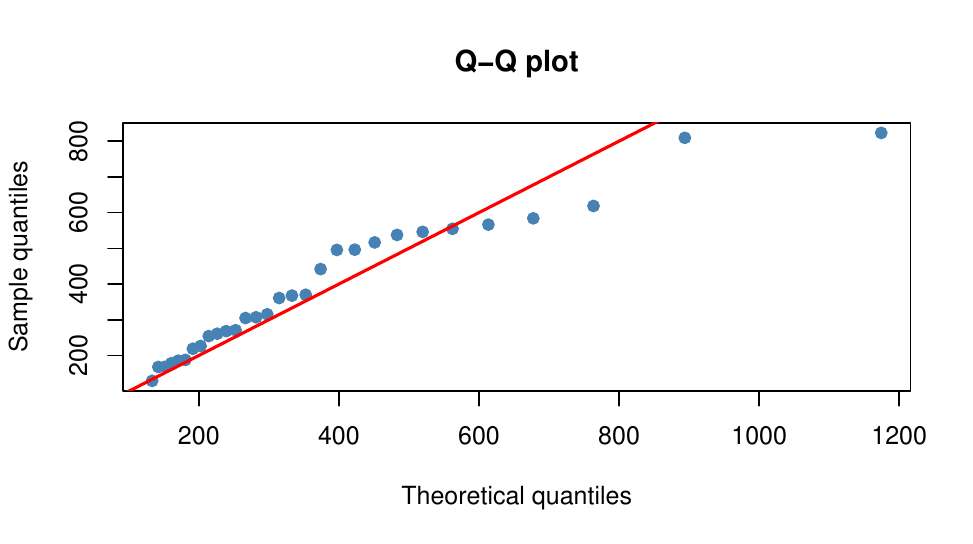}
	\end{minipage}
	\hspace{0.02\textwidth}
	\begin{minipage}{0.30\textwidth}
		\centering
		\includegraphics[height=4.75cm,width=4.75cm]{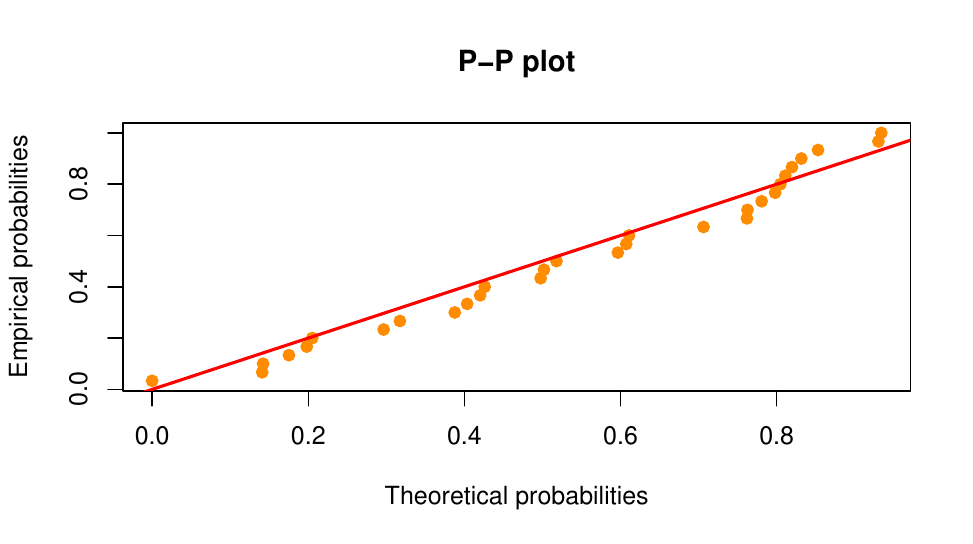}
	\end{minipage}
	\vspace{0.00cm}
	\vspace{0.25cm}
	\begin{minipage}{0.30\textwidth}
		\centering
		\includegraphics[height=4.75cm,width=4.75cm]{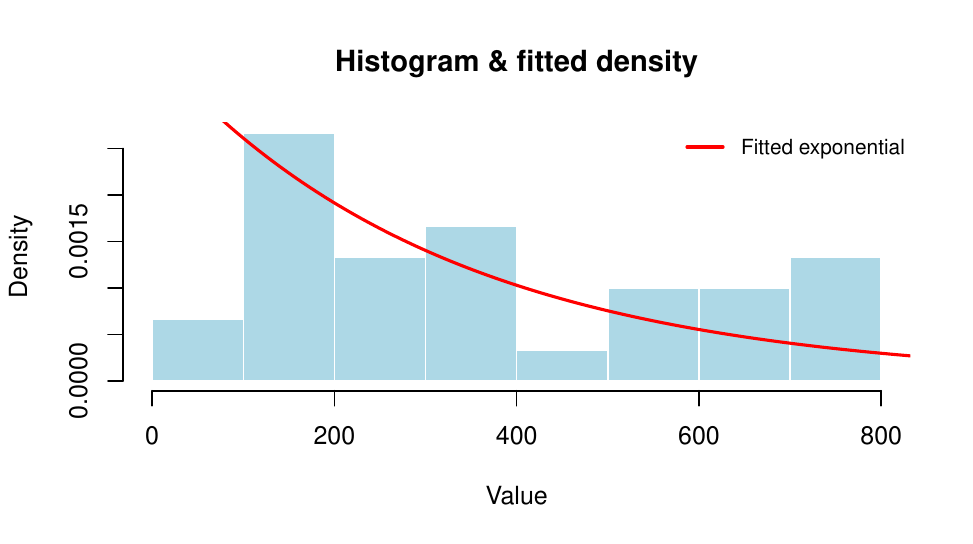}
	\end{minipage}
	\hspace{0.02\textwidth}
	\begin{minipage}{0.30\textwidth}
		\centering
		\includegraphics[height=4.75cm,width=4.75cm]{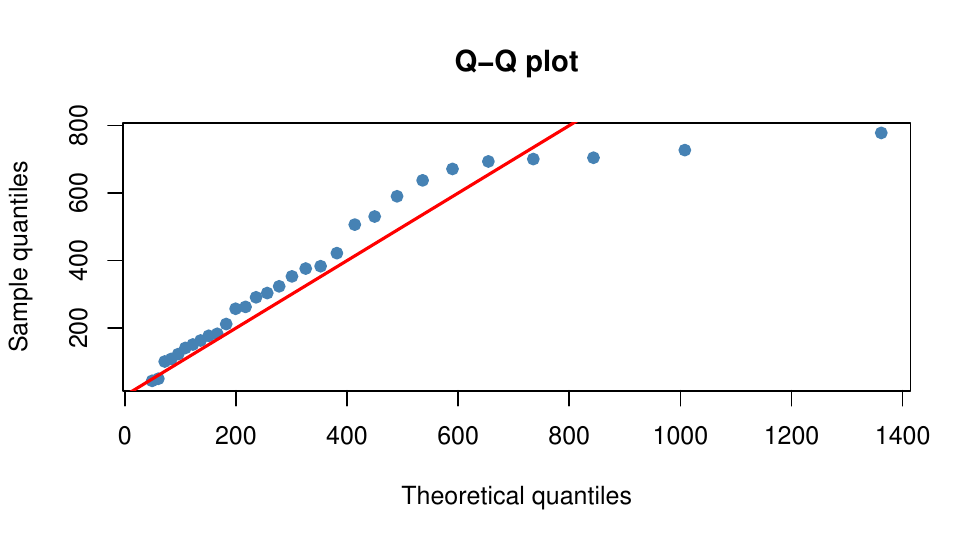}
	\end{minipage}
	\hspace{0.02\textwidth}
	\begin{minipage}{0.30\textwidth}
		\centering
		\includegraphics[height=4.75cm,width=4.75cm]{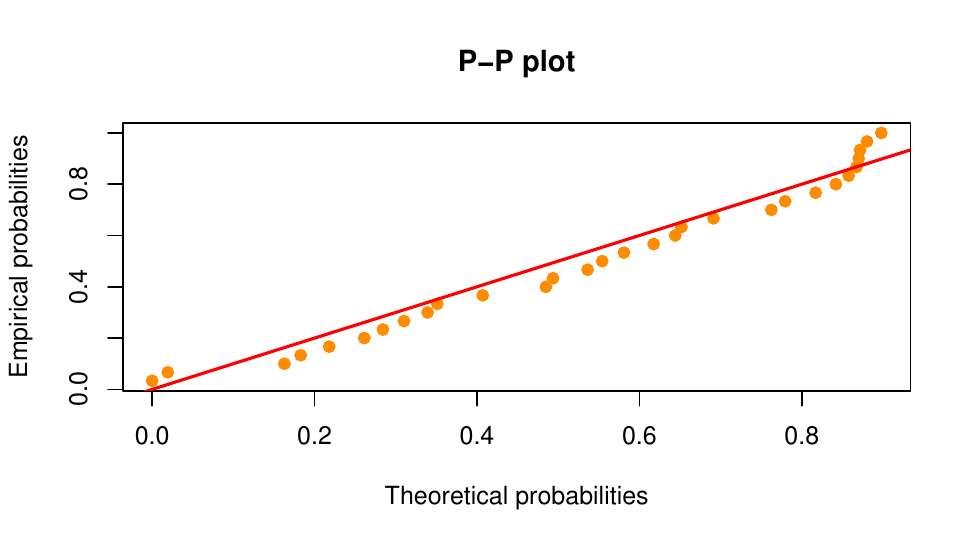}
	\end{minipage}
	\vspace{0.00cm}
	\vspace{0.00cm}
	\begin{minipage}{0.30\textwidth}
		\centering
		\includegraphics[height=4.75cm,width=4.75cm]{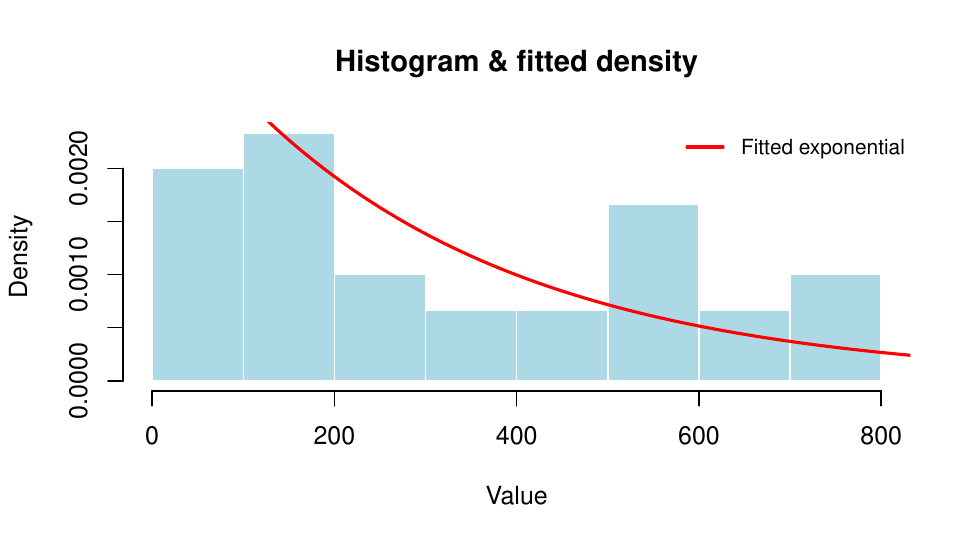}
	\end{minipage}
	\hspace{0.02\textwidth}
	\begin{minipage}{0.30\textwidth}
		\centering
		\includegraphics[height=4.75cm,width=4.75cm]{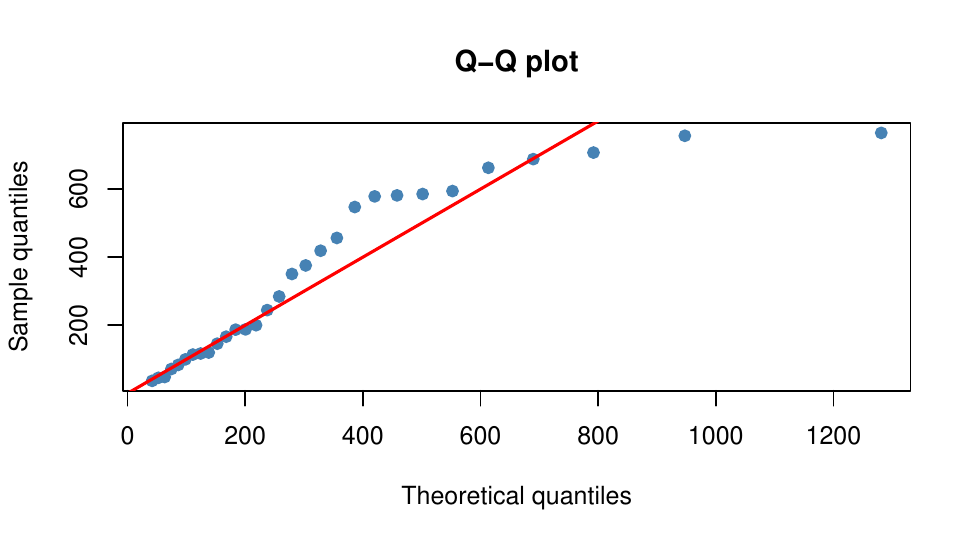}
	\end{minipage}
	\hspace{0.02\textwidth}
	\begin{minipage}{0.30\textwidth}
		\centering
		\includegraphics[height=4.75cm,width=4.75cm]{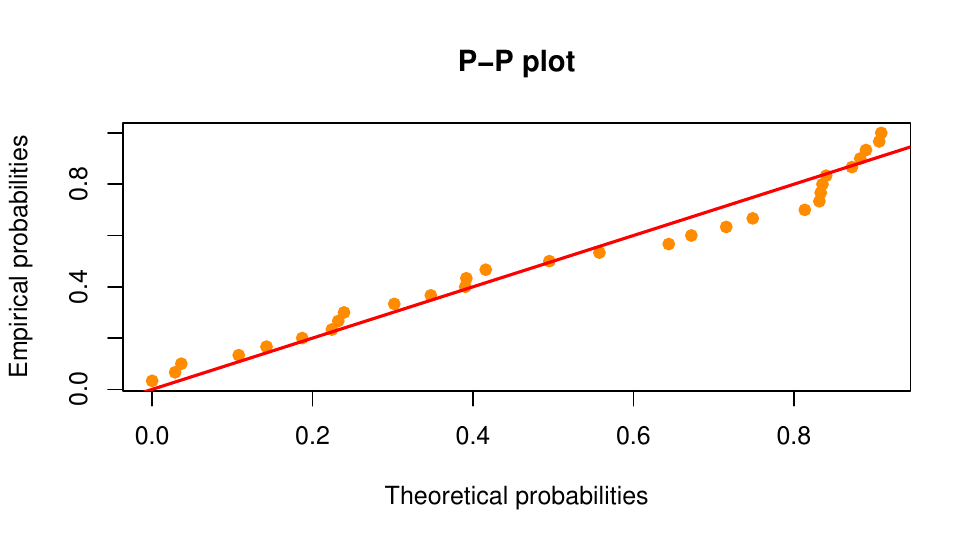}
	\end{minipage}
	\vspace{0.05cm}
	\begin{minipage}{0.70\textwidth}
		\centering
		\includegraphics[height=8cm,width=15cm]{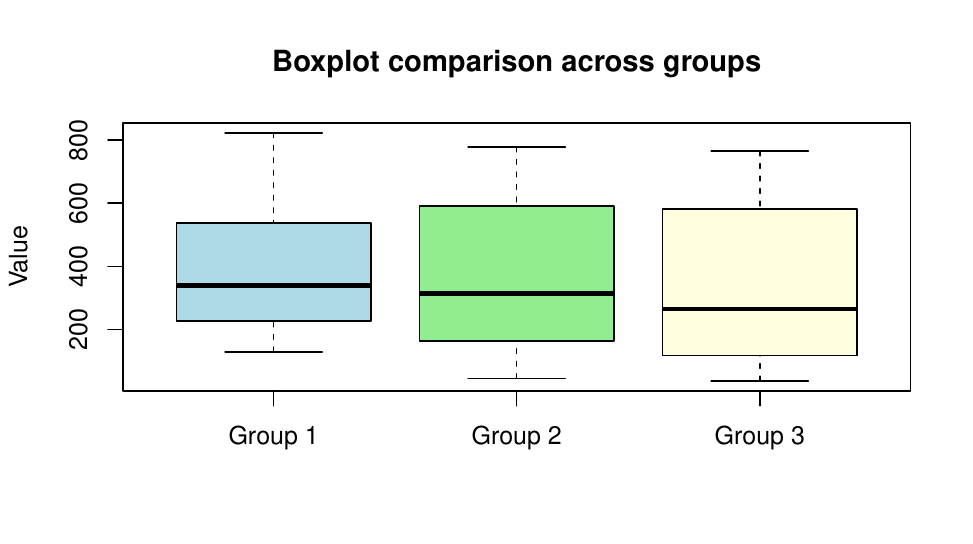}
	\end{minipage}
	
	\caption{Graphical representations of the three jute fiber datasets (Jute 1--3) and the corresponding boxplot.}
	
	\label{}
\end{figure}

\begin{table}[H]
	\centering
	\caption{Summary statistics and maximum likelihood estimates of $\theta_{i}$ for the complete jute fiber breaking strength data.}
	\label{T:7}
	\begin{tabular}{c c c c c c}
		\toprule
		\textbf{Gauge Length} & \textbf{Sample Size} & $\boldsymbol{\mu}$ & $\boldsymbol{\sigma}$ & $\boldsymbol{\theta}$ & \textbf{K-S statistic} \\
		\midrule
		5 mm  & 30 & 129.08 & 255.2947 & 384.375 & 0.6568 \\
		10 mm & 30 & 43.93  & 321.7997 & 365.73  & 0.7517 \\
		20 mm & 30 & 36.75  & 303.99   & 340.74  & 0.4908 \\
		\bottomrule
	\end{tabular}
\end{table}

\vspace{1em}

\begin{table}[h]
	\centering
	\caption{Parametric bootstrap simultaneous confidence intervals for differences of means under 10\% doubly censored data}
	\label{tab:sci}
	\small
	\begin{tabular}{l ccc ccc}
		\toprule
		& \multicolumn{3}{c}{\textbf{Unbiased (Sum)}} & \multicolumn{3}{c}{\textbf{MLE (Sum)}} \\
		\cmidrule(lr){2-4} \cmidrule(lr){5-7}
		\textbf{Parameter} & Lower & Upper & Length & Lower & Upper & Length \\
		\midrule
		$\theta_1-\theta_2$ & -186.3946 & 177.5651 & 363.9597 & -184.4705 & 175.9961 & 360.4666 \\
		$\theta_1-\theta_3$ &-163.1314 &209.0660 &372.1974 &-161.1434 & 207.4818 &368.6252  \\
		$\theta_2-\theta_3$ &-182.6714 & 237.4355 & 420.1068 & -180.6311 & 235.4438 & 416.0749  \\
		\bottomrule
	\end{tabular}
	
	\vspace{0.5cm}
	
	\begin{tabular}{l ccc ccc}
		\toprule
		& \multicolumn{3}{c}{\textbf{Unbiased (Max)}} & \multicolumn{3}{c}{\textbf{MLE (Max)}} \\
		\cmidrule(lr){2-4} \cmidrule(lr){5-7}
		\textbf{Parameter} & Lower & Upper & Length & Lower & Upper & Length \\
		\midrule
		$\theta_1-\theta_2$ & -179.0341 & 170.2046 & 349.2387 & -177.5929 & 169.1185 & 346.7114 \\
		$\theta_1-\theta_3$ & -157.7587 & 203.6933 & 361.4521 & -156.2491 & 202.5874 & 358.8365  \\
		$\theta_2-\theta_3$ & -180.6311 & 235.4438 &416.0749&  -152.0119& 206.8246& 358.8365\\
		\bottomrule
	\end{tabular}
\end{table}

\begin{table}[h]
	\centering
	\caption{Fiducial generalized  simultaneous confidence intervals for differences of means under 10\% Doubly censored data}
	\label{}
	\small
	\begin{tabular}{l ccc ccc}
		\toprule
		& \multicolumn{3}{c}{\textbf{Unbiased (Sum)}} & \multicolumn{3}{c}{\textbf{MLE (Sum)}} \\
		\cmidrule(lr){2-4} \cmidrule(lr){5-7}
		\textbf{Parameter} & Lower & Upper & Length & Lower & Upper & Length \\
		\midrule
		$\theta_1-\theta_2$ &-217.2833 &208.4538 &425.7371 &-216.6731 &208.1987 &424.8717\\
		$\theta_1-\theta_3$ &-194.7193 &240.6538 &435.3731 &-194.0749 &240.4132 &434.4881\\
		$\theta_2-\theta_3$ &-218.3252 &273.0893 &491.4146
		&-217.8015 &272.6142 &490.4157\\
		\bottomrule
	\end{tabular}
	
	\vspace{0.5cm}
	
	\begin{tabular}{l ccc ccc}
		\toprule
		& \multicolumn{3}{c}{\textbf{Unbiased (Max)}} & \multicolumn{3}{c}{\textbf{MLE (Max)}} \\
		\cmidrule(lr){2-4} \cmidrule(lr){5-7}
		\textbf{Parameter} & Lower & Upper & Length & Lower & Upper & Length \\
		\midrule
		$\theta_1-\theta_2$ &-222.6942 &213.8647 &436.5589 &-228.9228 &220.4484 &449.3713\\
		$\theta_1-\theta_3$ &-202.9457 &248.8803 &451.8260 &-209.3741 &255.7124 &465.0865\\
		$\theta_2-\theta_3$ &-198.5310 &253.2950 &451.8260 &-205.1369 &259.9496 &465.0865\\
		\bottomrule
	\end{tabular}
\end{table}

\clearpage

\begin{table}[H]
	\centering
	\caption{Remission duration (in months) for patients under four different drugs.}
	\label{T:10}
	\renewcommand{\arraystretch}{1.2}
	\setlength{\tabcolsep}{6pt}
	\begin{tabular}{c l}
		\toprule
		\textbf{Drug} & \textbf{Remission Duration} \\
		\midrule
		Test drug 1
		& 1.034, 2.344, 1.266, 1.563, 1.169, 4.118, 1.013, 1.509, 1.109, 1.965, 5.136, 1.533, \\
		& 1.716, 2.778, 2.546, 2.626, 3.413, 1.929, 2.061, 2.951\\
		\midrule
		Test drug 2
		& 5.115, 4.498, 4.617, 4.651, 4.533, 4.513, 7.641, 5.971,
		12.130, 4.699, 4.914, 17.169,  \\
		& 5.497, 11.332, 18.922, 13.712, 6.309, 10.086, 9.293, 11.787\\
		\midrule
		Control drug 1
		& 2.214, 4.976, 8.154, 2.686, 2.271, 3.139, 2.214, 4.480,
		8.847, 2.239, 3.473, 2.761, \\
		& 2.833, 2.381, 3.548, 2.414, 2.832, 5.551, 3.376, 2.968 \\
		\midrule
		Control drug 2
		& 4.158, 4.025, 5.170, 11.909, 4.912, 4.629, 3.955, 6.735,
		3.140, 12.446, 8.777, 6.321,  \\
		& 3.256, 8.250, 3.759, 5.205, 3.071, 3.147, 9.773, 10.218 \\
		\bottomrule
	\end{tabular}
\end{table}

\begin{table}[H]
	\centering
	\caption{Summary statistics and maximum likelihood estimates of $\theta_{i}$ for the complete drug data.}
	\label{T:11}
	\begin{tabular}{c c c c c c}
		\toprule
		\textbf{Drug} & \textbf{Sample Size} & $\boldsymbol{\mu}$ & $\boldsymbol{\sigma}$ & $\boldsymbol{\theta}$ & \textbf{K-S statistic} \\
		\midrule
		Test drug 1    & 20 & 1.0130 & 1.1756 & 2.1886 & 0.9869 \\
		Test drug 2    & 20 & 4.4980 & 3.8715 & 8.3695 & 0.1300 \\
		Control drug 1 & 20 & 2.2140 & 1.4539 & 3.6679 & 0.5989 \\
		Control drug 2 & 20 & 3.0710 & 3.0718 & 6.1428 & 0.7669 \\
		\bottomrule
	\end{tabular}
\end{table}

\begin{figure}[htbp]
	\centering
	\begin{minipage}{0.30\textwidth}
		\centering
		\includegraphics[height=3.75cm,width=4.75cm]{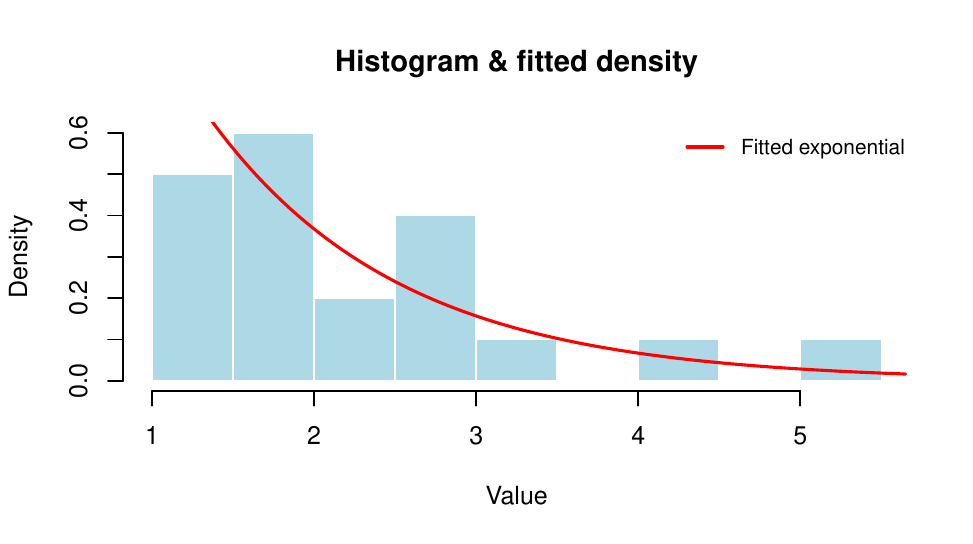}
	\end{minipage}
	\hspace{0.02\textwidth}
	\begin{minipage}{0.30\textwidth}
		\centering
		\includegraphics[height=3.75cm,width=4.75cm]{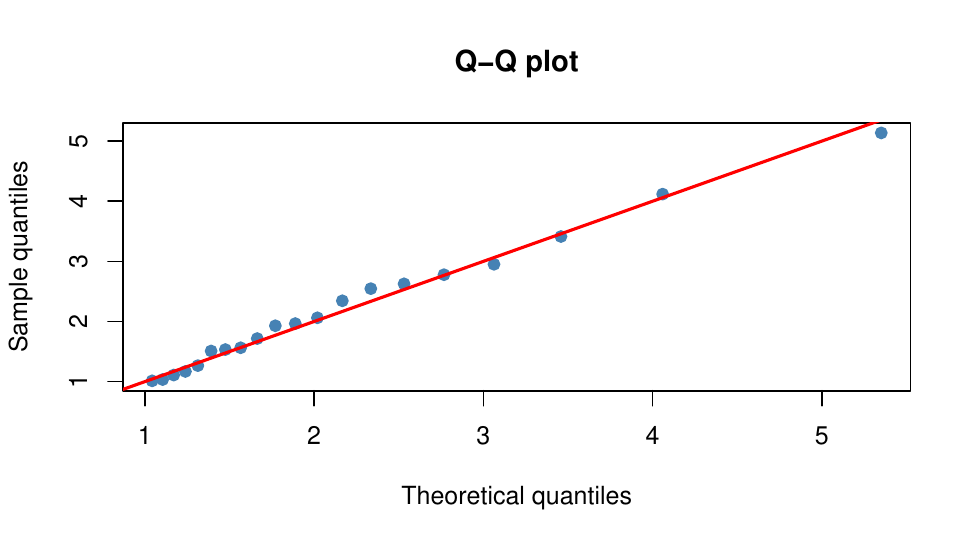}
	\end{minipage}
	\hspace{0.02\textwidth}
	\begin{minipage}{0.30\textwidth}
		\centering
		\includegraphics[height=3.75cm,width=4.75cm]{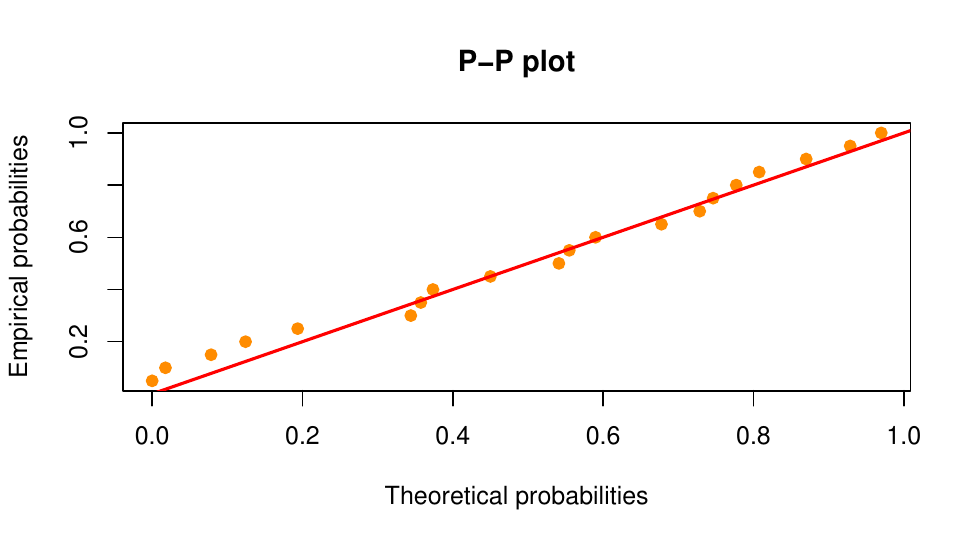}
	\end{minipage}
	\vspace{0.00cm}
	\vspace{0.25cm}
	\begin{minipage}{0.30\textwidth}
		\centering
		\includegraphics[height=3.75cm,width=4.75cm]{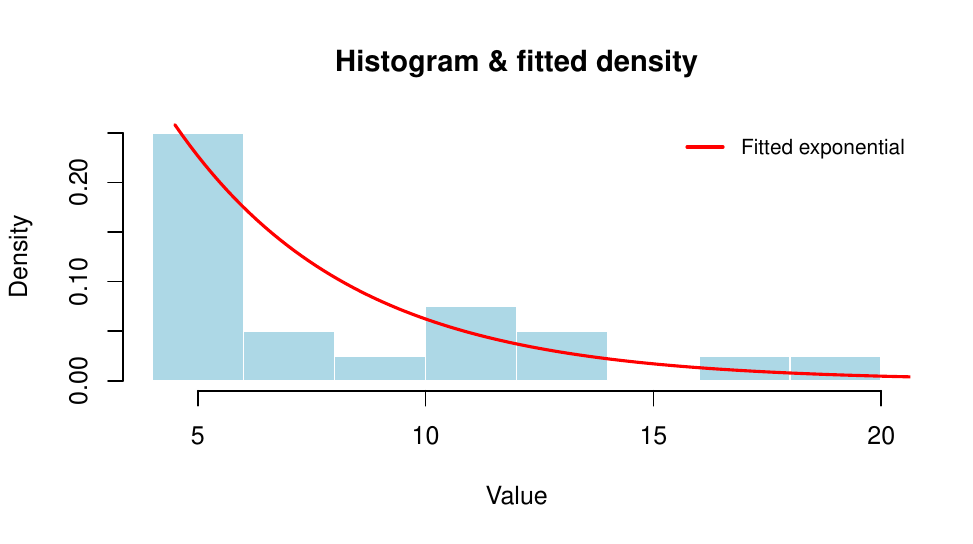}
	\end{minipage}
	\hspace{0.02\textwidth}
	\begin{minipage}{0.30\textwidth}
		\centering
		\includegraphics[height=3.75cm,width=4.75cm]{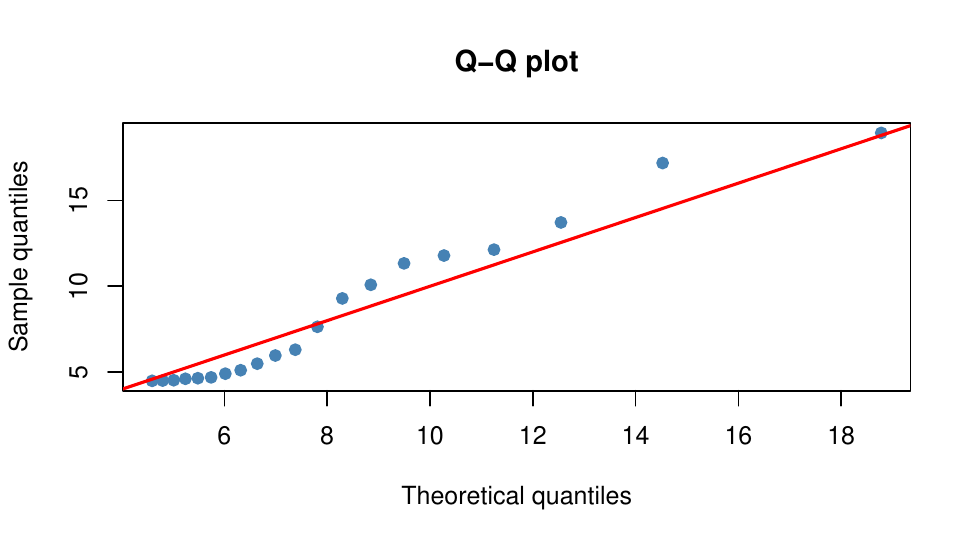}
	\end{minipage}
	\hspace{0.02\textwidth}
	\begin{minipage}{0.30\textwidth}
		\centering
		\includegraphics[height=3.75cm,width=4.75cm]{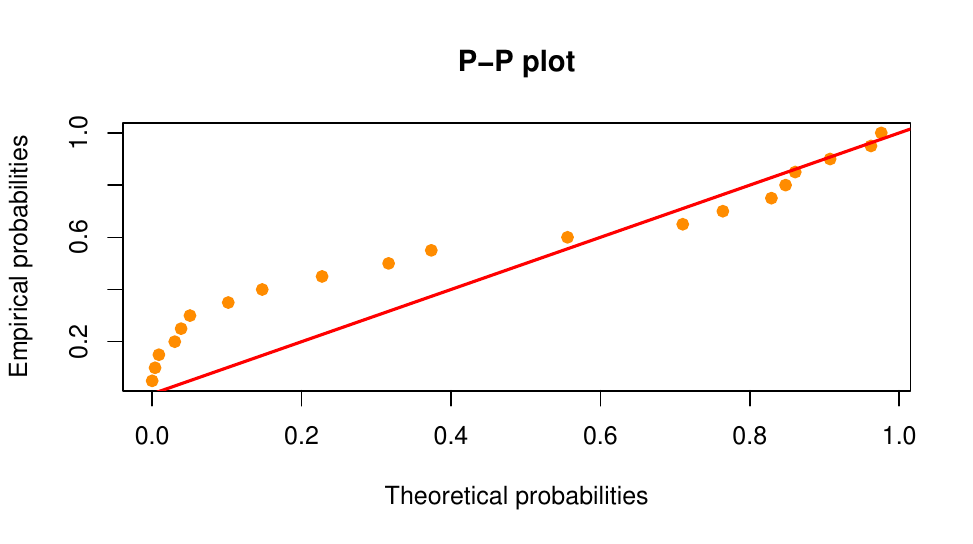}
	\end{minipage}
	\vspace{0.00cm}
	\vspace{0.00cm}
	\begin{minipage}{0.30\textwidth}
		\centering
		\includegraphics[height=3.75cm,width=4.75cm]{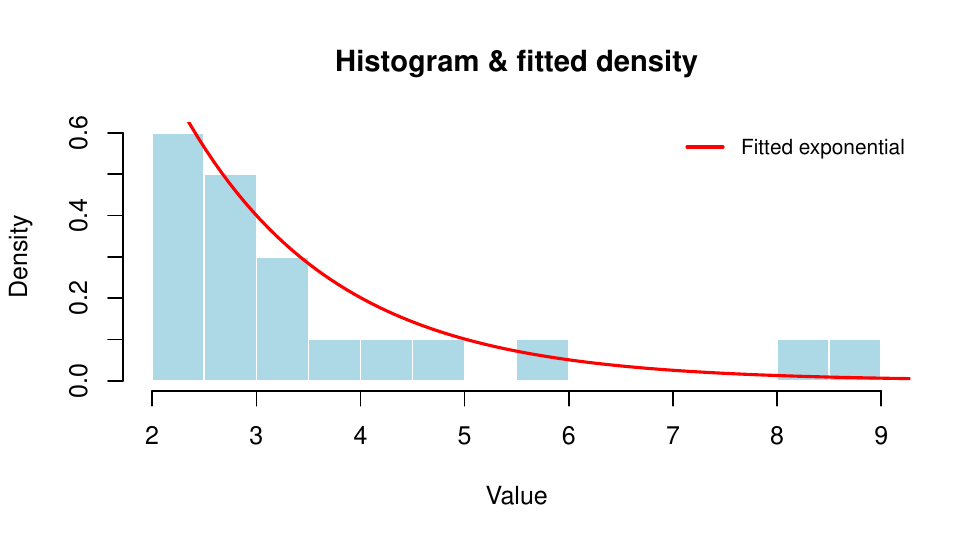}
	\end{minipage}
	\hspace{0.02\textwidth}
	\begin{minipage}{0.30\textwidth}
		\centering
		\includegraphics[height=3.75cm,width=4.75cm]{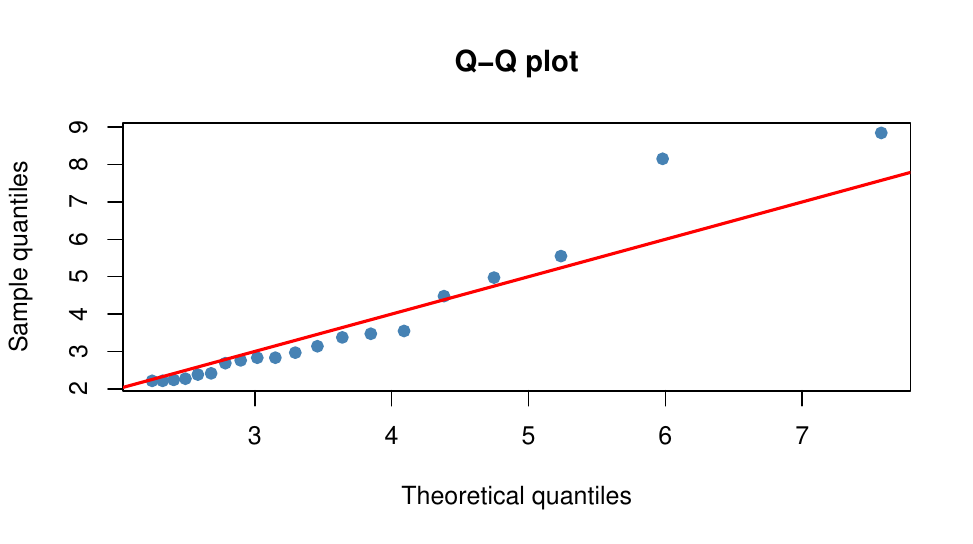}
	\end{minipage}
	\hspace{0.02\textwidth}
	\begin{minipage}{0.30\textwidth}
		\centering
		\includegraphics[height=3.75cm,width=4.75cm]{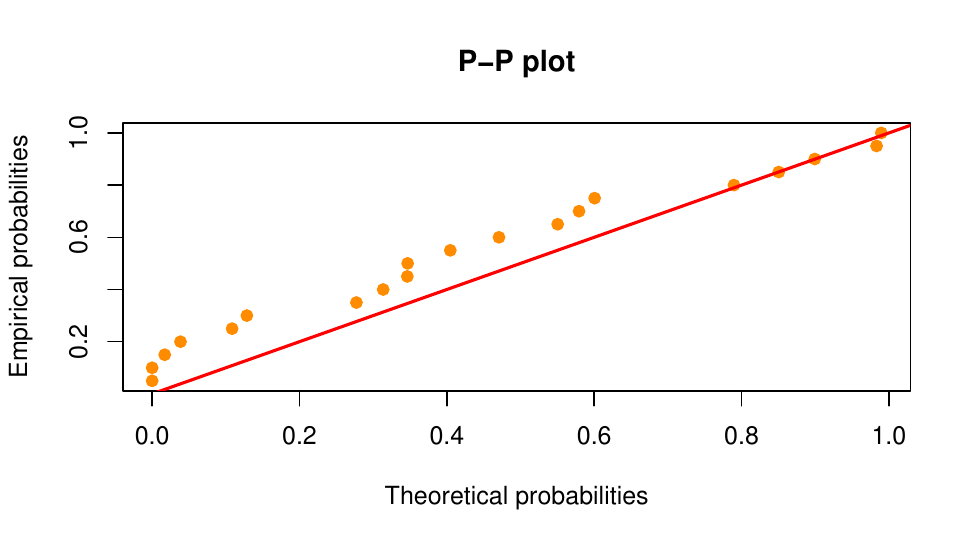}
	\end{minipage}
	\vspace{0.05cm}
	\vspace{0.00cm}
	\begin{minipage}{0.30\textwidth}
		\centering
		\includegraphics[height=3.75cm,width=4.75cm]{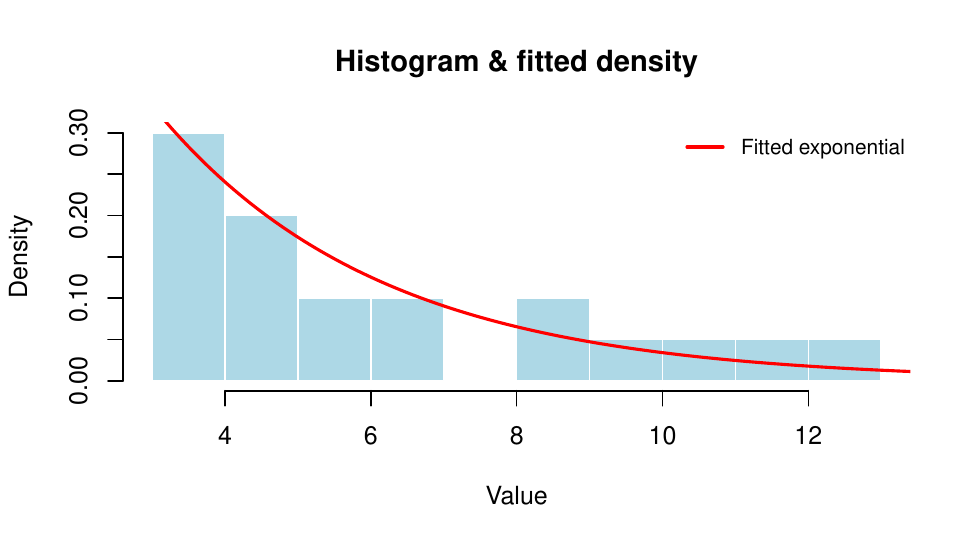}
	\end{minipage}
	\hspace{0.02\textwidth}
	\begin{minipage}{0.30\textwidth}
		\centering
		\includegraphics[height=3.75cm,width=4.75cm]{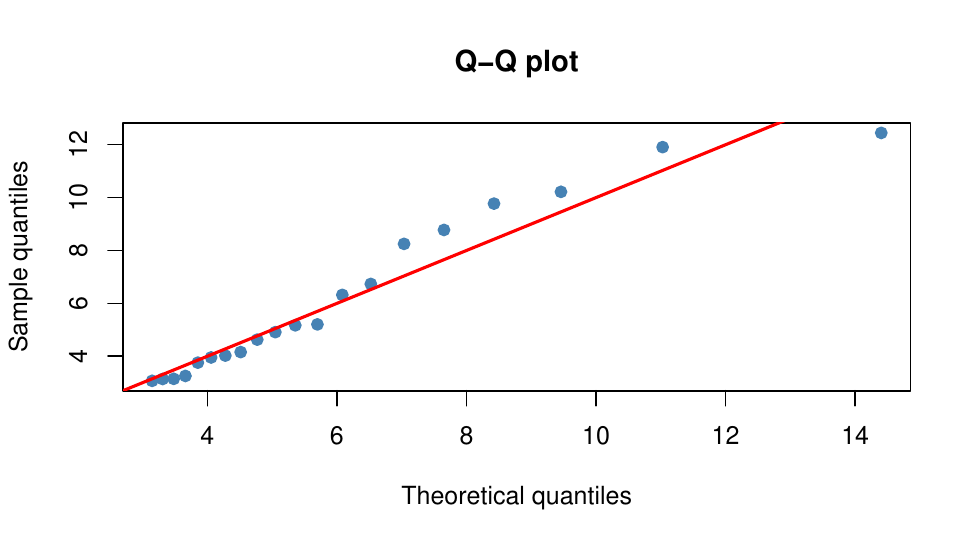}
	\end{minipage}
	\hspace{0.02\textwidth}
	\begin{minipage}{0.30\textwidth}
		\centering
		\includegraphics[height=3.75cm,width=4.75cm]{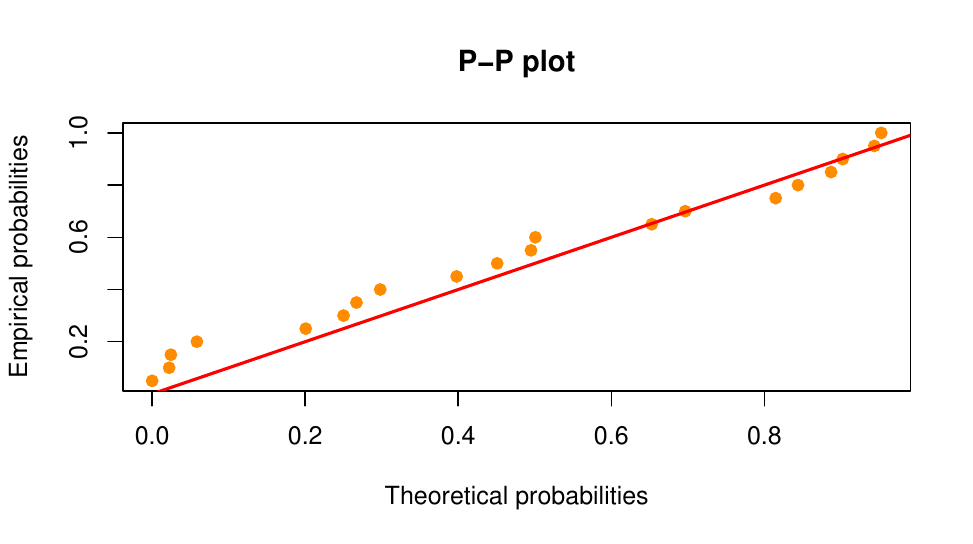}
	\end{minipage}
	\begin{minipage}{0.70\textwidth}
		\centering
		\includegraphics[height=8cm,width=15cm]{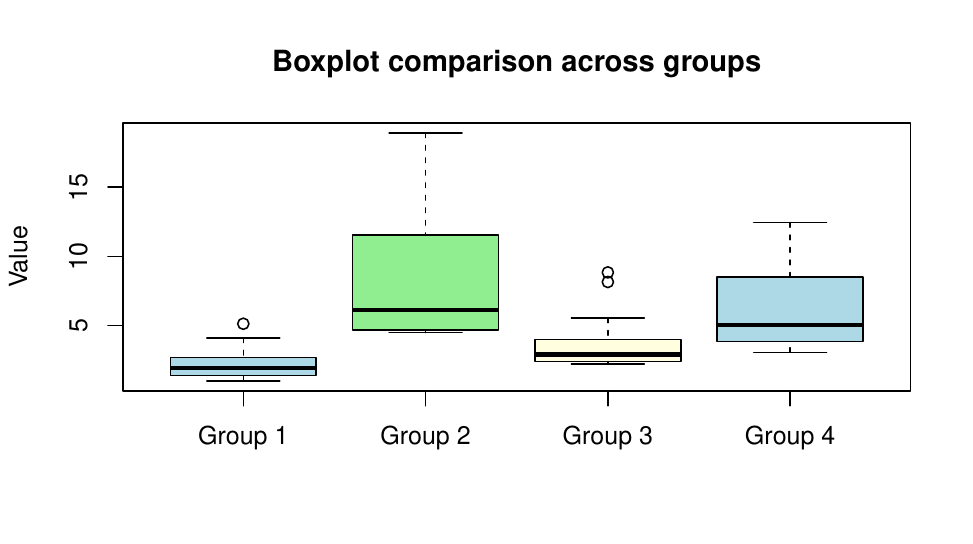}
	\end{minipage}
	
	\caption{Graphical representations of the four drug datasets (drug 1--4) and the corresponding boxplot.}
	
	\label{}
\end{figure}

\begin{table}[h]
	\centering
	\caption{Parametric bootstrap simultaneous confidence intervals for differences of means under 10\% doubly censored data}
	\label{}
	\small
	\begin{tabular}{l ccc ccc}
		\toprule
		& \multicolumn{3}{c}{\textbf{Unbiased (Sum)}} & \multicolumn{3}{c}{\textbf{MLE (Sum)}} \\
		\cmidrule(lr){2-4} \cmidrule(lr){5-7}
		\textbf{Parameter} & Lower & Upper & Length & Lower & Upper & Length \\
		\midrule
		$\theta_1-\theta_2$ & -8.7486 & -3.6124 & 5.1363 & -8.7364 & -3.6246 & 5.1119 \\
		$\theta_1-\theta_3$ & -2.6658 & -0.2920 & 2.3737 & -2.6601 & -0.2977 & 2.3624 \\
		$\theta_1-\theta_4$ & -6.0416 & -1.8661 & 4.1754 & -6.0316 & -1.8761 & 4.1556 \\
		$\theta_2-\theta_3$ &  2.0768 &  7.3264 & 5.2497 &  2.0892 &  7.3140 & 5.2247 \\
		$\theta_2-\theta_4$ & -0.9102 &  5.3635 & 6.2736 & -0.8953 &  5.3486 & 6.2439 \\
		$\theta_3-\theta_4$ & -4.6320 & -0.3179 & 4.3141 & -4.6218 & -0.3281 & 4.2937 \\
		\bottomrule
	\end{tabular}
	
	\vspace{0.5cm}
	
	\begin{tabular}{l ccc ccc}
		\toprule
		& \multicolumn{3}{c}{\textbf{Unbiased (Max)}} & \multicolumn{3}{c}{\textbf{MLE (Max)}} \\
		\cmidrule(lr){2-4} \cmidrule(lr){5-7}
		\textbf{Parameter} & Lower & Upper & Length & Lower & Upper & Length \\
		\midrule
		$\theta_1-\theta_2$ &-9.2057 & -3.1553 & 6.0503 &-9.1803 & -3.1807 & 5.9996 \\
		$\theta_1-\theta_3$ &-2.6149 & -0.3429 & 2.2721 &-2.6054 & -0.3524 & 2.2530 \\
		$\theta_1-\theta_4$ &-6.3542 & -1.5535 & 4.8006 &-6.3340 & -1.5737 & 4.7604 \\
		$\theta_2-\theta_3$ & 1.6764 &  7.7268 & 6.0503 & 1.7018 &  7.7014 & 5.9996 \\
		$\theta_2-\theta_4$ &-0.7985 &  5.2518 & 6.0503 &-0.7731 &  5.2264 & 5.9996 \\
		$\theta_3-\theta_4$ &-4.8753 & -0.0746 & 4.8006 &-4.8551 & -0.0948 & 4.7604 \\
		\bottomrule
	\end{tabular}
\end{table}

\begin{table}[h]
	\centering
	\caption{Fiducial generalized  simultaneous confidence intervals for differences of means under 10\% Doubly censored data}
	\label{}
	\small
	\begin{tabular}{l ccc ccc}
		\toprule
		& \multicolumn{3}{c}{\textbf{Unbiased (Sum)}} & \multicolumn{3}{c}{\textbf{MLE (Sum)}} \\
		\cmidrule(lr){2-4} \cmidrule(lr){5-7}
		\textbf{Parameter} & Lower & Upper & Length & Lower & Upper & Length \\
		\midrule
		$\theta_1-\theta_2$ &-9.1633 &-3.1977 &5.9655 &-9.0869 &-3.2741 &5.8128 \\
		$\theta_1-\theta_3$ &-2.8574 &-0.1004 &2.7570 &-2.8221 &-0.1357 &2.6864 \\
		$\theta_1-\theta_4$ &-6.3786 &-1.5291 &4.8496 &-6.3165 &-1.5912 &4.7254 \\
		$\theta_2-\theta_3$ & 1.6530 & 7.7502 &6.0972 & 1.7311 & 7.6721 &5.9411 \\
		$\theta_2-\theta_4$ &-1.4166 & 5.8699 &7.2865 &-1.3233 & 5.7766 &7.0999 \\
		$\theta_3-\theta_4$ &-4.9803 & 0.0304 &5.0107 &-4.9161 &-0.0338 &4.8823 \\
		\bottomrule
	\end{tabular}
	
	\vspace{0.5cm}
	
	\begin{tabular}{l ccc ccc}
		\toprule
		& \multicolumn{3}{c}{\textbf{Unbiased (Max)}} & \multicolumn{3}{c}{\textbf{MLE (Max)}} \\
		\cmidrule(lr){2-4} \cmidrule(lr){5-7}
		\textbf{Parameter} & Lower & Upper & Length & Lower & Upper & Length \\
		\midrule
		$\theta_1-\theta_2$ &-9.5262 &-2.8348 &6.6914 & -9.4245 &-2.9365 &6.4880\\
		$\theta_1-\theta_3$ &-2.7353 &-0.2225 &2.5128 & -2.6971 &-0.2607 &2.4365\\
		$\theta_1-\theta_4$ &-6.6085 &-1.2992 &5.3093 & -6.5278 &-1.3799 &5.1479\\
		$\theta_2-\theta_3$ & 1.3559 & 8.0473 &6.6914 &  1.4576 & 7.9456 &6.4880\\
		$\theta_2-\theta_4$ &-1.1191 & 5.5724 &6.6914 & -1.0174 & 5.4707 &6.4880\\
		$\theta_3-\theta_4$ &-5.1296 & 0.1797 &5.3093 & -5.0489 & 0.0990 &5.1479\\
		\bottomrule
	\end{tabular}
\end{table}

\clearpage
\appendix
\section{Appendix}\label{VII}
\begin{theorem}[\cite{gibbons2020nonparametric}]\label{A1}
	Let $X_{(r)}$ denote the $r$-th order statistic of a random sample of size $n$ from any continuous cumulative distribution function $F_{X}(x)$. Then if $\frac{r}{n} \to p$ as $n \to \infty,~0<p<1$, the distribution of 
	$$\left[\frac{n}{p(1-p)}\right]^{\frac{1}{2}} f_X(\theta)\left[X_(r)-\theta\right]\to \mathcal{N}(0,1), ~~ \theta = F_X^{-1}(p).$$
\end{theorem}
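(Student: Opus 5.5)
I would prove the central order-statistic limit directly, via Rényi's representation, rather than through the empirical process.

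\textbf{Reduction to a uniform sample.} Write $X_{(r)} = F_X^{-1}(U_{(r)})$, where $U_{(1)}\le\dots\le U_{(n)}$ are order statistics of a Uniform$(0,1)$ sample. This representation holds because $F_X$ is continuous. It then suffices to prove two things:
\begin{enumerate}
\item $\sqrt{n}\,(U_{(r)}-p)\xlongrightarrow{d}\mathcal{N}\bigl(0,p(1-p)\bigr)$;
\item a delta-method step for the map $F_X^{-1}$ at $p$.
\end{enumerate}
The delta-method step needs $F_X$ to be differentiable at $\theta=F_X^{-1}(p)$ with $f_X(\theta)>0$. The statement implicitly assumes this, since it divides by nothing but multiplies by $f_X(\theta)$. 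Under that assumption, $(F_X^{-1})'(p)=1/f_X(\theta)$, and we obtain
$$\sqrt{n}\,(X_{(r)}-\theta)\xlongrightarrow{d}\mathcal{N}\bigl(0,\,p(1-p)/f_X(\theta)^2\bigr),$$
which is exactly the stated normalization.

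\textbf{Uniform order-statistic limit.} For the uniform step I use the exponential-spacings representation
$$U_{(r)}\overset{d}{=}\frac{S_r}{S_{n+1}}, \qquad S_m=E_1+\dots+E_m,$$
where the $E_j$ are i.i.d.\ $\mathrm{Exp}(1)$. Then
$$\sqrt{n}\,(U_{(r)}-p)=\frac{n}{S_{n+1}}\cdot\frac{1}{\sqrt n}\bigl[(1-p)(S_r-r)-p\,(S_{n+1}-S_r-(n+1-r))\bigr]+o_P(1).$$
The $o_P(1)$ term absorbs the deterministic error $\sqrt{n}\,(r/n-p)$. Here I must require $r/n-p=o(n^{-1/2})$ rather than merely $r/n\to p$. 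Otherwise a bias term survives and the claim fails as stated.

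The two bracketed sums are independent, with variances $r$ and $n+1-r$. By the classical CLT applied to each sum, together with $n/S_{n+1}\xlongrightarrow{P}1$ (law of large numbers) and Slutsky's theorem, the limit is normal with variance
$$(1-p)^2p+p^2(1-p)=p(1-p).$$

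\textbf{Main obstacle.} The delicate step is the delta method for $F_X^{-1}$ under minimal regularity. Differentiability of $F_X$ at $\theta$ with positive derivative gives, for each fixed $t$,
$$\mathbb{P}\bigl(\sqrt n\,(X_{(r)}-\theta)\le t\bigr)=\mathbb{P}\bigl(U_{(r)}\le F_X(\theta+t/\sqrt n)\bigr),$$
with
$$F_X(\theta+t/\sqrt n)=p+t f_X(\theta)/\sqrt n+o(n^{-1/2}).$$
I would therefore work with CDFs directly rather than invert, which avoids needing continuity of $F_X^{-1}$ elsewhere. Pólya's theorem then transfers uniform convergence of the uniform order-statistic CDF to the shifted argument, and this completes the proof.
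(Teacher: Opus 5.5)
Your proof is correct. Note that the paper gives no proof of this statement at all: it quotes the result from Gibbons and Chakraborti and uses it as a black box to obtain (\ref{E1}) and (\ref{E7}). So your argument supplies what the paper omits rather than competing with a proof in it.

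\textbf{The main steps.} The spacings step checks out. You have
$S_r-pS_{n+1}=(1-p)(S_r-r)-p\,(S_{n+1}-S_r-(n+1-r))+r-p(n+1)$.
The two centred sums are independent, and $r\to\infty$ and $n+1-r\to\infty$ because $0<p<1$. The remainder $\{r-p(n+1)\}/\sqrt n$ vanishes precisely when $\sqrt n\,(r/n-p)\to0$. The transfer via $\mathbb{P}(X_{(r)}\le x)=\mathbb{P}(U_{(r)}\le F_X(x))$ and P\'olya's theorem is sound.

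\textbf{Your amendments to the hypotheses.} Both are genuine corrections of the statement as printed, not pedantry.
\begin{itemize}
\item With only $r/n\to p$ the claim fails. For $r=\lceil np+\sqrt n\,\rceil$ the standardized statistic tends to $\mathcal{N}(1/\sqrt{p(1-p)},1)$.
\item If $f_X(\theta)=0$, the left-hand side is identically zero, so it cannot converge to $\mathcal{N}(0,1)$.
\end{itemize}
Both amendments are harmless for the paper. The choice $r_i-1=n_ip_i$ gives $\sqrt{n_i}\,(r_i/n_i-p_i)=n_i^{-1/2}\to0$, and the exponential density at the $p_i$-quantile equals $(1-p_i)/\sigma_i>0$. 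The case $p_i=0$, which the paper allows, lies outside the theorem's range, but it is trivial because $X_{i1,n_i}-\mu_i=O_P(1/n_i)$.

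\textbf{Two minor inaccuracies.}
\begin{itemize}
\item $X_{(r)}\overset{d}{=}F_X^{-1}(U_{(r)})$ holds for every cdf, so continuity is not what makes it true.
\item The delta-method route was never actually obstructed. Continuity of $F_X$ together with $F_X'(\theta)>0$ makes $\theta$ the unique $p$-quantile. Hence $F_X^{-1}$ is continuous at $p$, $F_X(F_X^{-1}(u))=u$, and $(F_X^{-1})'(p)=1/f_X(\theta)$ follows. The CDF formulation is simply tidier.
\end{itemize}

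\textbf{Comparison with the other standard proof.} That proof writes $\mathbb{P}\bigl(\sqrt n(X_{(r)}-\theta)\le t\bigr)=\mathbb{P}\bigl(\mathrm{Bin}(n,F_X(\theta+t/\sqrt n))\ge r\bigr)$ and then needs a triangular-array de Moivre--Laplace theorem. Your route instead needs only the i.i.d.\ CLT, the law of large numbers and Slutsky's theorem.

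Your route is also the natural tool for this particular paper. For $\mathrm{Exp}(1)$ samples, R\'enyi's representation gives $Z_{r,n}\overset{d}{=}\sum_{j=1}^{r}E_j/(n-j+1)$ exactly. So (\ref{E1}) and (\ref{E7}) could be proved directly by a Lindeberg argument, without appealing to Theorem \ref{A1} at all.
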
	
\begin{lemma}\label{A2}
	Let $r_{i}-1=n_{i}p_{i}$. Then, as $n_i \to \infty$,
	\begin{itemize}
		\item [(i)] $\sum\limits_{j=n_{i}-r_{i}+1}^{n_{i}}\frac{1}{j} \to -\ln(1-p_{i})$ 
		\item [(ii)] $\sqrt{n_{i}}\left(\ln(1-p_{i})+\sum\limits_{j=n_{i}-r_{i}+1}^{n_{i}}\frac{1}{j}\right) \to 0.$
	\end{itemize}
\end{lemma}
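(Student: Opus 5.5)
The plan is to prove the two parts separately, writing $m_i=n_i-r_i+1$ so that $\sum_{j=n_i-r_i+1}^{n_i}\frac{1}{j}=H_{n_i}-H_{m_i-1}$, where $H_N=\sum_{j=1}^N\frac1j$ is the harmonic number. By hypothesis $r_i-1=n_ip_i$, hence $m_i-1=n_i-r_i=n_i(1-p_i)-1$; in particular $m_i/n_i\to 1-p_i$ and $m_i\to\infty$ whenever $p_i<1$, which holds because $0\le p_i+q_i<1$. If $p_i=0$ the sum is just $1/n_i$, so (i) is immediate and (ii) follows from $\sqrt{n_i}/n_i\to0$. For the rest we assume $p_i>0$.

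For (i), the key tool is the classical expansion $H_N=\ln N+\gamma+O(1/N)$, where $\gamma$ is Euler's constant. Applying it to both harmonic numbers gives
\[
\sum_{j=n_i-r_i+1}^{n_i}\frac1j=\ln\frac{n_i}{n_i(1-p_i)-1}+O\!\left(\frac1{n_i}\right).
\]
Since $\ln\frac{n_i}{n_i(1-p_i)-1}\to\ln\frac1{1-p_i}=-\ln(1-p_i)$, part (i) follows. Alternatively one can squeeze the sum between $\int_{m_i}^{n_i+1}\frac{dx}{x}$ and $\int_{m_i-1}^{n_i}\frac{dx}{x}$, which gives the same limit with only elementary estimates.

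For (ii), the task is to quantify the error in (i). Using the same expansion,
\[
\ln(1-p_i)+\sum_{j=n_i-r_i+1}^{n_i}\frac1j=\ln(1-p_i)-\ln\Bigl(1-p_i-\tfrac1{n_i}\Bigr)+O\!\left(\tfrac1{n_i}\right)=-\ln\Bigl(1-\tfrac{1}{n_i(1-p_i)}\Bigr)+O\!\left(\tfrac1{n_i}\right)=O\!\left(\tfrac1{n_i}\right).
\]
Multiplying by $\sqrt{n_i}$ then gives $O(n_i^{-1/2})\to0$. The integral-comparison bounds also work here: the upper and lower bounds differ by $\frac1{m_i-1}-\frac1{n_i}=O(1/n_i)$, and each of them lies within $O(1/n_i)$ of $-\ln(1-p_i)$.

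The main obstacle is part (ii). There the argument depends on the exact relation $r_i-1=n_ip_i$, so that the ratio $(n_i-r_i)/n_i$ equals $1-p_i$ up to $O(1/n_i)$. If we only knew $r_i/n_i\to p_i$, the convergence rate could be slower than $n_i^{-1/2}$ and (ii) would fail. We will therefore state explicitly that we use the hypothesis $r_i-1=n_ip_i$ exactly, with $n_ip_i$ an integer, and keep track of the uniform $O(1/N)$ remainder in the expansion of $H_N$.
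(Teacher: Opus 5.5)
Your proposal is correct and follows essentially the same route as the paper: you write the sum as a difference of harmonic numbers, apply $H_N=\ln N+\gamma+O(1/N)$, and read off both the limit and the $O(1/n_i)$ error. Your explicit computation $-\ln\bigl(1-\frac{1}{n_i(1-p_i)}\bigr)=O(1/n_i)$ from the exact relation $r_i-1=n_ip_i$, together with your correct index $H_{n_i-r_i}$, fills in a step the paper only asserts in part (ii), since its part (i) used only $r_i/n_i\to p_i$.
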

\textbf{Proof} (i)  We have $$\sum\limits_{j=n_{i}-r_{i}+1}^{n_{i}}\frac{1}{j}=\sum\limits_{j=1}^{n_{i}}\frac{1}{j}-\sum\limits_{j=1}^{n_{i}-r_{i}+1}\frac{1}{j}=H_{n_{i}}-H_{n_{i}r_{i}}$$ 
Where $H_n=\sum\limits_{j=1}^{n}\frac{1}{j}$. Using the asymptotic expansion $H_n=\ln n +\nu+O(\frac{1}{n}),$ where $\nu$ is Euler's constant (see \cite{graham1994concrete}) and $O(\frac{1}{n_{i}})$ describes a quantity that shrinks towards zero as $n_i$ grows larger. We obtain
$$H_{n_{i}}-H_{n_{i}-r_{i}}=\ln n_i+\nu+O\left(\frac{1}{n_{i}}\right)-\ln(n_i-r_i)-\nu-O\left(\frac{1}{n_{i}-r_{i}}\right)=\ln\left(\frac{n_i}{n_i-r_i}\right)+O\left(\frac{1}{n_{i}}\right).$$
Since $\frac{n_{i}}{n_{i}-r_{i}}=\frac{1}{1-\frac{r_{i}}{n_{i}}}$, this gives 
$$\sum\limits_{j=n_{i}-r_{i}+1}^{n_{i}}\frac{1}{j}=\ln\left(\frac{1}{1-\frac{r_i}{n_i}}\right)+O\left(\frac{1}{n_{i}}\right).$$
Finally, the condition $r_{i}-1=n_{i}p_{i}$ implies $\frac{r_{i}}{n_{i}} \to p_{i}$ as $n_{i} \to \infty$ and $O(\frac{1}{n_{i}})\to 0$. Hence
$$\sum\limits_{j=n_{i}-r_{i}+1}^{n_{i}}\frac{1}{j}\to \ln\left(\frac{1}{1-p_{i}}\right) = -\ln(1-p_{i})$$

\noindent (ii) From the computation in part (i), we know precisely that 
$$\sum\limits_{j=n_{i}-r_{i}+1}^{n_{i}}\frac{1}{j}=-\ln{(1-p_{i})}+O\left(\frac{1}{n_{i}}\right).$$
Substituting this into the expression in part II, the leading term cancel
$$\ln(1-p_{i})+\sum\limits_{j=n_{i}-r_{i}+1}^{n_{i}}\frac{1}{j}=\ln(1-p_{i})-\ln(1-p_{i})+O\left(\frac{1}{n_{i}}\right)=O\left(\frac{1}{n_{i}}\right).$$ 
Multiplying by $\sqrt{n}$ gives 
$$\sqrt{n_{i}}\left(\ln(1-p_{i})+\sum\limits_{j=n_{i}-r_{i}+1}^{n_{i}}\frac{1}{j}\right)=O\left(\frac{1}{\sqrt{n_{i}}}\right).$$
Since $O(\frac{1}{\sqrt{n_i}})\to 0$ as $n_{i} \to \infty$, we conclude that 
$$\sqrt{n_{i}}\left(\ln(1-p_{i})+\sum\limits_{j=n_{i}-r_{i}+1}^{n_{i}}\frac{1}{j}\right)\to 0.$$

\end{document}